\documentclass[11pt,a4paper,openany]{amsart}
\usepackage{mathrsfs}

\usepackage{amsmath,amsfonts,ams,verbatim}
\usepackage{latexsym}
\usepackage{amssymb,leftidx}
\usepackage{extarrows}
\usepackage{overpic}
\usepackage{color}
\usepackage{epsfig}
\usepackage{subfigure}

\setlength{\textheight}{21cm} \setlength{\textwidth}{14.5cm}
\setlength{\oddsidemargin}{1cm} \setlength{\evensidemargin}{1cm}

\newcommand\scc{\mathrm{sc}}
\newcommand\A{\mathcal{A}}



\makeatletter
\@addtoreset{equation}{section}\makeatother

\numberwithin{equation}{section}
\newtheorem{theorem}{Theorem}[section]
\newtheorem{proposition}[theorem]{Proposition}
\newtheorem{lemma}[theorem]{Lemma}
\theoremstyle{remark}\newtheorem{remark}[theorem]{Remark}


\begin{document}
\title[Strichartz estimate for Klein-Gordon]{Strichartz estimate and nonlinear Klein-Gordon on non-trapping scattering space}

 \author{Junyong Zhang}
\address{Department of Mathematics, Beijing Institute of Technology, Beijing, China, 100081; Department of Mathematics, Stanford University, Stanford, CA, USA, 94305} \email{zhang\_junyong@bit.edu.cn, junyongz@stanford.edu}

\author{Jiqiang Zheng}
\address{Universit\'e Nice Sophia-Antipolis, 06108 Nice Cedex 02, France}
\email{zhengjiqiang@gmail.com, zheng@unice.fr}

 \maketitle

\begin{abstract} We study the nonlinear Klein-Gordon equation on a product space $M=\R\times X$ with metric $\tilde{g}=dt^2-g$ where $g$ is the scattering metic on $X$. 
We establish the global-in-time Strichartz estimate for Klein-Gordon equation without loss of derivative by using the microlocalized  spectral measure of Laplacian on scattering manifold showed in \cite{HZ} and a Littlewood-Paley
squarefunction estimate proved in \cite{Zhang}. We prove the global
existence and scattering for a family of nonlinear Klein-Gordon equations for small initial data with minimum regularity on
this setting. 
\end{abstract}

\begin{center}
 \begin{minipage}{120mm}
   { \small {\bf Key Words: Strichartz estimate, scattering manifold, spectral measure, global existence, scattering theory }
      {}
   }\\
    { \small {\bf AMS Classification:}
      { 35Q40, 35S30, 47J35.}
      }
 \end{minipage}
 \end{center}
\section{Introduction and Statement of Main Results}
In this paper we consider the evolution of a semilinear Klein-Gordon equations with power-type nonlinearities on a non-trapping scattering manifold. 
More specifically, we consider the following family of nonlinear Klein-Gordon equation
\begin{equation*}
\Box_{\tilde{g}} u+m^2u=F(u,D u), \quad (t,z)\in \R\times X, \quad u(0)=u_0(z), ~\partial_tu(0)=u_1(z).
\end{equation*}
Here $\Box_{\tilde{g}}=\partial_t^2-\Delta_g$ denotes the d'Alembertian in the metric $\tilde{g}=dt^2-g$ and $\Delta_g$ is the Laplacian on the manifold $X$ with scattering metric $g$ introduced by
Melrose \cite{Melrose1}. We focus on the questions of the minimum regularity for which local well-posedness and small nonlinear scattering hold true.  One of the motivation for this study is
that in low-dimensional case, we can achieve the level of regularity corresponding to a conserved quantity (such as, e.g. the energy) and thus get global existence for large initial data.

On more general class of physical manifolds,  Hintz and Vasy \cite{H, HV, HV1} studied the semilinear and quasilinear wave and Klein-Gordon equation on the physically cosmological spacetimes as solutions to 
Einstein's field equations; In particular they gave a detailed analysis of the long-time behavior of linear and nonlinear waves on Kerr-de Sitter space and non-trapping Lorentzian scattering spaces
for large regularity and small initial data.  The recent development of \cite{Vasy} allowed them to set up the analysis of the associated linear problem in a framework of Fredholm problem,
in which they used Melrose's philosophy \cite{Melrose, Melrose1} of studying differential operators $P=\Box_{\tilde{g}}$ on a non-compact space $M$ by 
compactifying $M$ to a manifold $\overline{M}$ with boundary or even corners. The concrete choice of compactification is connected to the geometric structure of $\overline{M}$ near infinity.
In our less complicated product setting, we use the same Melrose's idea with $P=\Delta_{g}$ on $X$ in the study of the spectral measure of the Laplacian \cite{H,HZ}
and then analyze the propagator $e^{it\sqrt{1-\Delta_g}}$, thus we expect better result on the lowest regularity due to the establishment of the global-in-time Strichartz estimate.

In the simplest flat Euclidean space, there is a large number of literature to study the nonlinear Klein-Gordon equation.  In the flat Euclidean space, where $X=\R^n$ and
$g_{jk}=\delta_{jk}$,  the dispersive properties of the Klein-Gordon and other dispersive equations have been proved to be powerful in the study of nonlinear problems.
The Strichartz estimate for the solution of the homogenous and inhomogeneous Klein-Gordon equation
in the form of space time integrability properties gives
\begin{equation}\label{stri}
\begin{split}
&\|u(t,z)\|_{L^q_t(I;L^r_z(\R^n))}+\|u(t,z)\|_{C(I; H^s(\R^n))}\\
&\qquad\lesssim \|u_0\|_{ H^s(\R^n)}+\|u_1\|_{
H^{s-1}(\R^n)}+\|F\|_{L^{\tilde{q}'}_t(I;L^{\tilde{r}'}_z(\R^n))},
\end{split}
\end{equation}
where the pairs $(q,r), (\tilde{q},\tilde{r})\in [2,\infty]^2$
satisfy the admissible condition for $0\leq\theta\leq1$
\begin{equation}\label{adm}
\frac{2}q+\frac{n-1+\theta}r\leq\frac{n-1+\theta}2,\quad (q,r, (n-1+\theta)/2)\neq(2,\infty,1).
\end{equation}
and the gap condition
\begin{equation}\label{scaling}
\frac1q+\frac {n+\theta}r=\frac {n+\theta}2-s=\frac1{\tilde{q}'}+\frac
{n+\theta} {\tilde{r}'}-2.
\end{equation}
We refer to Brenner \cite{B}, Ginibre-Velo\cite{GV} and Keel-Tao\cite{KT} for more details. 
In particular $\theta=0$, these estimates corresponding to wave equation serves as a tool for existence results about the nonlinear wave equation. For example, Lindblad-Sogge \cite{LS}
answered the problem of finding minimal regularity conditions on the initial data ensuring local well-posedness for semilinear wave equations. 
Similarly analogous results for the Klein-Gordon equation can be carried out as same as for the wave equation even thought the sharpness of well-posedness results is not known.
There is too many reference to cite all here, we refer the reader to \cite{IMN,NS} and the reference therein.

In view of the rich Euclidean theory due to the Strichartz estimate, it is natural to consider the corresponding equations on more general manifolds.
However it is difficult or impossible to
establish the same Strichartz-type estimates as in Euclidean space on the large class of manifold due to the influence of qualitative geometric properties.  
On asymptotically de Sitter spaces Baskin \cite{Baskin,Baskin1, Baskin2} established
a family of local (in time) weighted Strichartz estimates with derivative losses for the Klein-Gordon equation on asymptotically de Sitter spaces and 
provided a heuristic argument for the non-existence of a global dispersive estimate on these spaces.
The Strichartz estimates
are local-in-time or loss of derivatives on the compact
manifold with or without boundary, see \cite{BLP, BHS,Kap, Hart1, Tataru} and
references therein. On noncompact manifold with
nontrapping condition, one can obtain global-in-time Strichartz
estimates. For example,  the global Strichartz estimates on a exterior manifold in
$\mathbb{R}^n$ to a convex obstacle, for metrics $g$ which agrees
with the Euclidean metric outside a compact set with nontrapping
assumption,  are obtained by
Smith-Sogge \cite{HS} for odd dimension, and Burq \cite{Burq} and
Metcalfe \cite{Met} for even dimension. Blair-Ford-Marzuola
\cite{BFM} established global Strichartz estimates for the wave
equation on flat cones $C(\mathbb{S}_{\rho}^1)$ by using the
explicit representation of the fundamental solution.
Anker-Pierfelice \cite{AP} study the problem on minimal regularity condition on the initial data ensuring well-posedness for wave and Klein-Gordon on 
hyperbolic space.  On the non trapping scattering manifold, the same setting considered here, Hassell,
Tao, and Wunsch first established an $L^4_{t,z}$-Strichartz
estimate for  Schr\"odinger equation in \cite{ HTW1} and then they \cite{HTW} extended the
estimate to full admissible local-in-time Strichartz estimate except endpoint
$q=2$. More recently, Hassell-Zhang \cite{HZ} improved the local-in-time one to global-in-time one and fixed the endpoint $q=2$ by analyzing the microlocalized spectral measure.
Following this Zhang \cite{Zhang} extended the global-in-time result for the wave equation. Bouclet-Mizutani \cite{BM} generalized the Schr\"odinger result to the setting with mild trapping and with more general ends.

In this paper, we will establish the global-in-time Strichartz estimate for the Klein-Gordon and apply it to 
study the minimal regularity problem for nonlinear Klein-Gordon on the non-trapping scattering manifold (asymptotically conic manifold)
which is the same as in \cite{HTW, HZ, Zhang} including the asymptotically Euclidean space.  The scattering manifold means that $X$ can be compactified to a manifold with boundary
$\overline{X}$ such that $g$ becomes a scattering metric on $\overline{X}$; see more about this next section. For geometric reasons, we expect the same dispersive properties of Klein-Gordon as in the Euclidean setting.
The key ingredient is to establish global-in-time Strichartz estimate for Klein-Gordon. It is known that Klein-Gordon behaviors like Schr\"odinger at low frequency and wave equation at high frequency. 
As same as the Euclidean space, we introduce a parameter $\theta$ for Klein-Gordon admissible pair which is wave admissible at $\theta=0$ and Schr\"odinger pair at $\theta=1$.
More precisely, we have the result about Strichartz estimates in the following.

Let $ H^s(X)={(1-\Delta_g)}^{-\frac{s}2}L^2(X)$ be
the inhomogeneous Sobolev space over $X$. Throughout this paper,
pairs of conjugate indices are written as $r, r'$, where
$\frac{1}r+\frac1{r'}=1$ with $1\leq r\leq\infty$.

\begin{theorem}[Global-in-time Strichartz estimate]\label{Strichartz} Let $(X,g)$ be non-trapping scattering manifold of dimension
$n\geq3$.  Suppose that $u$ is the
solution to the Cauchy problem
\begin{equation}\label{eq}
\begin{cases}
\partial_{t}^2u-\Delta_g u+u=F(t,z), \quad (t,z)\in I\times X; \\ u(0)=u_0(z),
~\partial_tu(0)=u_1(z),
\end{cases}
\end{equation}
for some initial data $u_0\in  H^{s}, u_1\in  H^{s-1}$, and
the time interval $I\subseteq\R$, then
\begin{equation}\label{stri}
\begin{split}
&\|u(t,z)\|_{L^q_t(I;L^r_z(X))}+\|u(t,z)\|_{C(I; H^s(X))}\\
&\qquad\lesssim \|u_0\|_{ H^s(X)}+\|u_1\|_{
H^{s-1}(X)}+\|F\|_{L^{\tilde{q}'}_t(I;L^{\tilde{r}'}_z(X))},
\end{split}
\end{equation}
where the pairs $(q,r), (\tilde{q},\tilde{r})\in [2,\infty]^2$
satisfy the KG-admissible condition with $0\leq\theta\leq1$
\begin{equation}\label{adm}
\frac{2}q+\frac{n-1+\theta}r\leq\frac{n-1+\theta}2,\quad (q,r,n,\theta)\neq(2,\infty,3,0).
\end{equation}
and the gap condition
\begin{equation}\label{scaling}
\frac1q+\frac {n+\theta}r=\frac {n+\theta}2-s=\frac1{\tilde{q}'}+\frac
{n+\theta}{\tilde{r}'}-2.
\end{equation}
\end{theorem}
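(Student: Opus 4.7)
The plan is to reduce the Klein-Gordon propagator $e^{\pm it\sqrt{1+H}}$, with $H:=-\Delta_g$, to frequency-localized pieces via a dyadic Littlewood-Paley decomposition $\varphi_j(\sqrt{H})$, prove scale-dependent $L^1\to L^\infty$ dispersive estimates at each dyadic scale, apply the Keel-Tao abstract Strichartz machinery to each piece, and finally reassemble using the square-function estimate from \cite{Zhang}. First I would use Duhamel so that it suffices to bound $U(t):=e^{it\sqrt{1+H}}\varphi_j(\sqrt{H})$; inserting the Hassell-Zhang microlocalized spectral measure from \cite{HZ} yields the representation
\[
\varphi_j(\sqrt{H})\,e^{it\sqrt{1+H}}\,\varphi_j(\sqrt{H})=\int_0^\infty e^{it\sqrt{1+\lambda^2}}\,\varphi_j(\lambda)^2\,\specl,
\]
and the task becomes a stationary-phase analysis of this oscillatory integral, with the microlocal data controlling the amplitude.

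Next I would split into high frequency ($j\geq 0$) and low frequency ($j<0$) regimes, which is exactly the mechanism that produces the interpolation parameter $\theta$ in \eqref{adm}. For $j\geq 0$, on the support of $\varphi_j$ one has $\sqrt{1+\lambda^2}=\lambda\sqrt{1+\lambda^{-2}}$, a smooth perturbation of the wave phase $\lambda$; the stationary-phase analysis in \cite{Zhang} then transfers, giving
\[
\bigl\|\varphi_j(\sqrt{H})\,e^{it\sqrt{1+H}}\,\varphi_j(\sqrt{H})\bigr\|_{L^1\to L^\infty}\lesssim 2^{jn}\bigl(1+2^j|t|\bigr)^{-(n-1)/2}.
\]
For $j<0$, writing $\sqrt{1+\lambda^2}=1+\lambda^2\psi(\lambda)$ with $\psi$ smooth at $0$, and absorbing $e^{it}$ (harmless in $L^p$), the integral becomes Schr\"odinger-like at scale $2^j$, so the \cite{HZ} Schr\"odinger dispersive bound gives
\[
\bigl\|\varphi_j(\sqrt{H})\,e^{it\sqrt{1+H}}\,\varphi_j(\sqrt{H})\bigr\|_{L^1\to L^\infty}\lesssim 2^{jn}\bigl(1+2^{2j}|t|\bigr)^{-n/2}.
\]
Interpolating these two regimes produces, for every $\theta\in[0,1]$, a dispersive rate that, combined with the trivial $L^2$-boundedness and the Keel-Tao theorem, yields a dyadic Strichartz inequality for every KG-admissible pair in \eqref{adm} with Sobolev index $s$ dictated by \eqref{scaling}.

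The last step is the dyadic summation. The Littlewood-Paley square-function equivalence on $(X,g)$ established in \cite{Zhang} gives
\[
\|u\|_{L^r(X)}\simeq\Bigl\|\bigl(\textstyle\sum_j|\varphi_j(\sqrt{H})u|^2\bigr)^{1/2}\Bigr\|_{L^r(X)},\qquad 2\leq r<\infty,
\]
and combined with Minkowski (valid since $q,r\geq 2$) converts the $\ell^2_j$-sum of dyadic Strichartz bounds into \eqref{stri}; the inhomogeneous statement then follows from the homogeneous one by the standard Christ-Kiselev argument whenever admissibility is non-sharp. I expect the main obstacle to be the endpoint $q=2$: the excluded case $(2,\infty,3,0)$ is the classical wave failure in $n=3$, while for the Schr\"odinger-type endpoint at low frequency one must invoke the finer microlocal measure estimates of \cite{HZ}. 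A secondary technical point is the transition regime $j\sim 0$, where neither the wave nor the Schr\"odinger asymptotic of $\sqrt{1+\lambda^2}$ is sharp; but the two dispersive bounds coincide up to constants when $2^j\sim 1$, so the regime can be absorbed into either side with no loss, and uniformity in $\theta$ is preserved throughout the interpolation.
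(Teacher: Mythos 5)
Your architecture matches the paper's: microlocalized spectral measure from \cite{HZ}, dyadic frequency decomposition, $L^1\to L^\infty$ dispersive estimates, semiclassical Keel--Tao, square function reassembly, Christ--Kiselev for the inhomogeneous estimate. There is, however, a genuine gap in the central dispersive step.

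The high-frequency dispersive bound you propose,
\[
\bigl\|\varphi_j(\sqrt{\mathrm{H}})\,e^{it\sqrt{1+\mathrm{H}}}\,\varphi_j(\sqrt{\mathrm{H}})\bigr\|_{L^1\to L^\infty}\lesssim 2^{jn}\bigl(1+2^j|t|\bigr)^{-(n-1)/2},
\]
is the pure \emph{wave} estimate: writing $h=2^{-j}$ it reads $h^{-(n+1)/2}(h+|t|)^{-(n-1)/2}$, which feeds into Proposition \ref{prop:semi} with $\sigma=(n-1)/2$ and hence only yields admissibility $\frac2q+\frac{n-1}{r}\le\frac{n-1}2$, i.e. $\theta=0$. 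It does not and cannot produce the KG-admissible range $0<\theta\le 1$. Your statement that ``interpolating these two regimes produces, for every $\theta\in[0,1]$, a dispersive rate'' is not a valid operation: the low-frequency Schr\"odinger bound and the high-frequency wave bound apply to \emph{disjoint} dyadic pieces, and interpolating between dispersive estimates for different operators does not give a dispersive estimate for either operator. For a fixed pair $(q,r)$ with $\theta>0$, Keel--Tao demands decay rate $\sigma=(n-1+\theta)/2>(n-1)/2$ \emph{at every high-frequency scale}, and this is exactly what your estimate fails to deliver.

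What is missing is that $\sqrt{1+\lambda^2}$ is not a linear phase on a dyadic block $\lambda\sim 2^k$: it has curvature $\partial_\lambda^2\sqrt{1+\lambda^2}\sim 2^{-3k}$, which after rescaling to $\lambda\sim 1$ and $h=2^{-k}$ becomes $\partial_\lambda^2\Phi\sim h^2$. A Van der Corput argument then upgrades the high-frequency bound to
\[
h^{-(n-1)}(|t|/h)^{-(n-1)/2}(1+h|t|)^{-1/2},
\]
and the extra factor $(1+h|t|)^{-1/2}$ is what gets traded off to produce $2^{k(n+1+\theta)/2}(2^{-k}+|t|)^{-(n-1+\theta)/2}$ for every $\theta\in[0,1]$ (Proposition \ref{dispersive-h}). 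This is the step where the Klein--Gordon structure genuinely differs from wave, and it is the step your proposal omits. A secondary, more minor issue: the oscillatory-integral representation you write only holds after inserting the operator partition $Q_j(\lambda)\,dE_{\sqrt{\mathrm{H}}}(\lambda)\,Q_j(\lambda)^*$; the spectral measure itself does not admit the form $\lambda^{n-1}(\sum_\pm e^{\pm i\lambda d}a_\pm+b)$ globally, because of conjugate points. You gesture at ``the microlocal data controlling the amplitude,'' but the $Q_j$ must appear explicitly both in the dispersive proof and in the reassembly (the paper sums over $j=1,\dots,N$ and $k\in\mathbb{Z}$ separately).
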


\begin{remark}We remark that the estimates here are the same as the Strichartz estimates for Klein-Gordon on Euclidean space which are global-in-time and have no loss of
derivatives. 
\end{remark}

We sketch the proof as follows. As same as \cite{HZ, Zhang}, our strategy is to use the abstract
Strichartz estimate proved in Keel-Tao \cite{KT}.  Thus,
with $U(t)$ denoting the (abstract) propagator, we need to show
uniform $L^2\rightarrow L^2$ estimate for $U(t)$, and
$L^1\rightarrow L^\infty$ type dispersive estimate on the $U(t)
U(s)^*$ with a bound of the form $O((1+|t-s|)^{-(n-1+\theta)/2})$ with $0\leq\theta\leq 1$. In the flat
Euclidean setting, the estimates are usually obtained by using stationary phase argument. One point is to write the propagator in the form of oscillatory integral. Since the
Laplacian in our general setting is degenerate when it is close to the compactified boundary, the formulate turns out to be more complicated. On the other hand, the conjugate point occuring in
this non-flat setting may lead to the failure of the dispersive estimate. For example,
\cite{HW1} showed that the Schr\"odinger propagator $e^{it\Delta_g}$ failed
to satisfy such a dispersive estimate at any pair of conjugate
points $(z,z') \in X \times X$ (i.e. pairs $(z,z')$
where a geodesic emanating from $z$ has a conjugate point at $z'$).
Fortunately, we can localize the propagator to separate the conjugating points
and write the propagator in a form of oscillatory integral by using a microlocalized spectral measure. The microlocalized spectral measure $Q_j(\lambda)
dE_{\sqrt{-\Delta_g}}(\lambda) Q_j(\lambda)^*$ constructed in \cite{HZ} not only has a size
estimate in but captures its oscillatory
behavior, where $Q_j(\lambda)$ is a member of a
partition of the identity operator in $L^2(X)$.  
In the stationary phase argument, the Klein-Gordon multiplier $e^{it\sqrt{1+\lambda^2}}$ behaviors like wave at high frequency and Schr\"odinger at low frequency.
We establish the dispersive estimate with norm $O((1+|t-s|)^{-n/2})$ at low frequency and $O((1+|t-s|)^{-(n-1+\theta)/2})$ at high frequency.
We finally show the Strichartz estimate from a frequency-localized Strichartz estimate by a square function estimate proved in \cite{Zhang}. The inhomogeneous Strichartz estimates follow from the
homogeneous estimates and the Christ-Kiselev lemma.  \vspace{0.2cm}

Having the Strichartz estimate, we first consider the well-posedness and
nonlinear scattering problem of the Cauchy problem on this setting
\begin{equation}\label{neq}
\begin{cases}
\partial_t^2 u-\Delta_g u+u=\pm |u|^{p-1}u,\qquad
(t,z)\in\R\times X,\\
u(t,z)|_{t=0}=u_0(z),\quad
\partial_tu(t,z)|_{t=0}=u_1(z).
\end{cases}
\end{equation}
In the case of flat Euclidean space, there are many results on the
understanding of the global existence and scattering. We refer the
readers to \cite{LS,Sogge} and references therein.
We here are mostly interested in the range of exponents
$p\in[p_{\text{conf}},1+\frac4{n-2}]$ and the initial data is in $
H^{s_c}(X)\times  H^{s_c-1}(X)$, where
$p_{\text{conf}}=1+\frac4{n}$  and $s_c=\frac
n2-\frac2{p-1}$. The critical power $1+4/n$, which is different from wave equation's $1+\frac4{n-1}$ in \cite{Zhang}, is related to the dispersive estimate decay rate  
and it appears in the theorem because of the fact that NLKG is conformally invariant only if $F(u)=u^{1+4/n}$; see \cite{Strauss}. The other
power $1+4/(n-2)$ is related to the energy-critical index.\vspace{0.2cm}

Our first main result is about the well-posedness and nonlinear scattering with smallest regularity.

\begin{theorem}\label{thm1}  Let $(X,g)$ be a non-trapping scattering manifold of dimension
$n\geq3$.
Then if $(u_0,u_1)\in  H^{s_c}(X)\times H^{s_c-1}(X)$ and $p\in[1+\frac4{n-1},1+\frac4{n-2}]$, there exist $T>0$ and  a unique solution $u$ to \eqref{neq}
satisfying
\begin{equation}\label{lwp} u\in C_t([0,T];  H^{s_c}(X))\cap
L^{q_0}([0,T];L^{q_0}(X)),\end{equation} where
$q_0=(p-1)(n+1)/2$. In addition, if there is a small constant
$\epsilon(p)$ such that
\begin{equation}\label{small}
\|u_0\|_{ H^{s_c}}+\|u_1\|_{ H^{s_c-1}}<\epsilon(p),
\end{equation}
then there is a unique global and scattering solution $u$ to
\eqref{neq} satisfying
\begin{equation}\label{gwp} u\in C_t(\R; H^{s_c}(X))\cap
L^{q_0}(\R;L^{q_0}(X)).\end{equation}
Furthermore if $(u_0,u_1)\in  H^{1}(X)\times L^{2}(X)$ and $p\in[p_{\mathrm{conf}},1+\frac4{n-2})$, there exists a global solution to \eqref{neq} with minus sign in the nonlinearity.
\end{theorem}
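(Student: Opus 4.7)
\emph{Reduction to a fixed-point problem.} The plan is to rewrite \eqref{neq} in Duhamel form
\[
u(t)=\cos\bigl(t\sqrt{1-\Delta_g}\bigr)u_0+\frac{\sin\bigl(t\sqrt{1-\Delta_g}\bigr)}{\sqrt{1-\Delta_g}}u_1 \mp \int_0^t\frac{\sin\bigl((t-s)\sqrt{1-\Delta_g}\bigr)}{\sqrt{1-\Delta_g}}\bigl(|u|^{p-1}u\bigr)(s)\,ds,
\]
and solve by contraction in the Strichartz space $X_T:=C_t([0,T];H^{s_c}(X))\cap L^{q_0}_t([0,T];L^{q_0}_z(X))$ furnished by Theorem \ref{Strichartz}. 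The natural pair is $(q,r)=(q_0,q_0)$ at $\theta=0$: the admissibility $(n+1)/q_0\le(n-1)/2$ is exactly the hypothesis $p\ge 1+4/(n-1)$, and the gap condition gives $s=s_c$. Theorem \ref{Strichartz} then reads
\[
\|u\|_{X_T}\le C\bigl(\|u_0\|_{H^{s_c}}+\|u_1\|_{H^{s_c-1}}\bigr)+C\|F(u)\|_{L^{\tilde q'}_t L^{\tilde r'}_z},
\]
for any dual admissible pair $(\tilde q,\tilde r)$ meeting \eqref{adm} and \eqref{scaling}.

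\emph{Nonlinear estimate and contraction.} Since $s_c\in[1/2,1]$ throughout the relevant range of $p$, a pointwise Hölder bound is not enough: one must push derivatives of order $s_c-1$ onto $u$ by a fractional chain rule on $X$. A Meyer/Christ-Weinstein type inequality of this kind can be derived on the scattering manifold by combining the Mikhlin-type spectral multiplier theorem that underlies the Littlewood-Paley square-function estimate of \cite{Zhang} with the standard Euclidean argument. The upshot is an estimate of schematic form
\[
\||u|^{p-1}u\|_{L^{\tilde q'}_t W^{s_c-1,\tilde r'}_z}\lesssim \|u\|_{L^{q_0}_{t,z}}^{p-1}\|u\|_{L^{a}_t W^{s_c-1,b}_z},
\]
where $(a,b)$ is an auxiliary Strichartz pair and $(\tilde q,\tilde r)$ its dual. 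Paired with the analogous difference estimate and the absolute continuity of the $L^{q_0}_t$ norm in $t$, this makes the Duhamel map a strict contraction on a small ball of $X_T$ once $T$ is small enough, giving local well-posedness and uniqueness as in \eqref{lwp}.

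\emph{Small-data scattering and large-data energy argument.} For initial data small in $H^{s_c}\times H^{s_c-1}$, the linear Strichartz norm is small on all of $\R$, hence the contraction closes globally and yields \eqref{gwp}. Scattering then follows from the global finiteness of $\|u\|_{L^{q_0}_{t,z}(\R\times X)}$: the tails of the Duhamel integral tend to $0$ in $H^{s_c}\times H^{s_c-1}$ as $t\to\pm\infty$, producing asymptotic free Klein-Gordon profiles. For the large-data statement with $(u_0,u_1)\in H^1\times L^2$ and $p\in[p_{\mathrm{conf}},1+4/(n-2))$, local well-posedness at regularity $s=1$ is obtained by the same Strichartz scheme but with a pair matched to $s=1$, together with the subcritical Sobolev embedding $H^1(X)\hookrightarrow L^{p+1}(X)$ valid for $p<1+4/(n-2)$. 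The defocusing sign then yields the conserved positive energy
\[
E(u)(t)=\frac{1}{2}\int_X\bigl(|\partial_tu|^2+|\nabla_gu|^2+|u|^2\bigr)\,dg+\frac{1}{p+1}\int_X|u|^{p+1}\,dg,
\]
which uniformly controls $\|(u(t),\partial_tu(t))\|_{H^1\times L^2}$, so iterating the local result on intervals of fixed length produces a global solution.

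\emph{Main obstacle.} The hardest part is the fractional chain rule on the scattering manifold at critical regularity $s_c\in[1/2,1]$. The Euclidean Kato-Ponce proof relies on Littlewood-Paley decomposition, the Mikhlin multiplier theorem, and boundedness of the Hardy-Littlewood maximal function; on $X$ these tools must be imported from the spectral multiplier machinery that underlies \cite{Zhang}. A secondary technicality appears when $p$ is close to the energy-critical endpoint $1+4/(n-2)$: the auxiliary pair $(a,b)$ must be chosen with $\theta\in(0,1]$ rather than $\theta=0$ in order to stay admissible, so the full Klein-Gordon admissibility range in \eqref{adm} is genuinely needed.
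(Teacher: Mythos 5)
The broad outline here is right---Duhamel $+$ contraction in a Strichartz space, a fractional product rule for the nonlinearity, and for large data the energy plus iteration argument---and your treatment of the small-data scattering and the $H^1\times L^2$ global part is essentially the paper's. But there is a genuine gap in the nonlinear estimate, which is where the work lives.

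You propose to run the contraction directly at level $s=s_c$ with $(q_0,q_0)$ as the main pair, and to control the nonlinearity by a ``fractional chain rule at order $s_c-1$.'' Note $s_c\in[\tfrac12,1]$ for the stated range of $p$, so $s_c-1\in[-\tfrac12,0]$: you would be invoking a fractional Leibniz/chain rule at \emph{negative} order, which is not what the available tool (Proposition~\ref{frule}, valid for $0\le s\le 1$) gives, and which in any case cannot close a contraction since $\|u\|_{L^a_t W^{s_c-1,b}}$ is not one of your controlled Strichartz norms. The paper's workaround is the key missing step. Rather than working at level $s_c$, one works at the \emph{fixed} level $s=\tfrac12$ (with $\theta=0$) and carries the remaining $\alpha:=s_c-\tfrac12\in[0,\tfrac12]$ derivatives explicitly, by running Strichartz for $(1-\Delta_g)^{\alpha/2}u$. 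Concretely, the Banach space is $C_tH^{s_c}\cap L^{q_0}_{t,z}\cap L^{q_1}_tH^{\alpha}_{q_1}$ with $q_1=\tfrac{2(n+1)}{n-1}$; the pair $(q_0,r_0)$ with $r_0^{-1}=\alpha/n+1/q_0$ and the endpoint pair $(q_1,q_1)$ are both admissible at level $\tfrac12$, and the Sobolev embedding $L^{q_0}_tH^{\alpha}_{r_0}\hookrightarrow L^{q_0}_{t,z}$ recovers the $L^{q_0}_{t,z}$ norm. The nonlinearity is then estimated in $L^{\tilde q'}_t H^{\alpha}_{\tilde r'}$ with $\tilde q'=\tilde r'=\tfrac{2(n+1)}{n+3}$ via the fractional Leibniz rule at the \emph{positive} order $\alpha$:
\[
\|F(u)\|_{L^{\tilde q'}_t H^{\alpha}_{\tilde r'}}\lesssim \|u\|_{L^{q_0}_{t,z}}^{p-1}\,\|u\|_{L^{q_1}_t H^{\alpha}_{q_1}},
\]
and both right-hand factors are controlled Strichartz norms. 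Without the shift to level $\tfrac12$ and the auxiliary pair $(q_1,q_1)$, your scheme does not produce a chain rule in the admissible range $[0,1]$.

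Two smaller points. The paper does not re-derive the fractional Leibniz rule from the spectral multiplier theorem; it simply cites the existing result on manifolds with Gaussian heat kernel bounds (Proposition~\ref{frule}, from \cite{CRT}), so no new harmonic analysis is needed at this step. And your remark that one must take $\theta\in(0,1]$ near the energy-critical endpoint is not used: Theorem~\ref{thm1} is proved entirely with the wave-admissible range $\theta=0$, since the endpoint $p=1+\tfrac4{n-2}$ corresponds to $\alpha=\tfrac12$ which is still inside $[0,1]$. The $\theta>0$ gain only appears in the proof of Theorem~\ref{thm2}.
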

\begin{remark} On local well posedenss and small scattering with the minimal regularity result, we have to restrict ourself with $p\in[1+\frac4{n-1},1+\frac4{n-2}]$, that is, $s_c\geq1/2$
 and we can extend the similar result to $p\in[p_{\mathrm{conf}},1+\frac4{n-1}]$ if 
$(u_0,u_1)\in  H^{s}(X)\times H^{s-1}(X)$ with $s\geq1/2$.
\end{remark}
\vspace{0.2cm}

We next specially consider the well-posedness of the following Yang-Mills-type equations on this setting with dimension $n=3$.
\begin{equation}\label{YM}
\begin{cases}
\partial_t^2 u-\Delta_g u+u=uDu+ |u|^{2}u,\qquad
(t,z)\in\R\times X,\\
u(t,z)|_{t=0}=u_0(z)\in H^s(X),\quad
\partial_tu(t,z)|_{t=0}=u_1(z)\in H^{s-1}(X).
\end{cases}
\end{equation}
The derivative $Du$ is measured relative to the metric structure, more precisely, $D$ is a first order scattering differential operator. If dropping the linear term $u$, this equation has the same scaling as cubic NLW, but is more difficult technically because of the derivative term $uDu$.
In the Euclidean space, this Yang-Mills-type wave equation was proved to be local well-posedness when $s>1$ in \cite{PS} and was showed to be ill-posedness when $s\leq 1$ in \cite{L}.

\begin{theorem}\label{thm2}  Let $(X,g)$ be a non-trapping scattering manifold of dimension
$n=3$ and let $0<\delta\ll1$. Suppose 
$(u_0,u_1)\in  H^{1+\delta}(X)\times H^{\delta}(X)$,
then there exist $T>0$ and  a unique solution $u$ to \eqref{neq}
satisfying
\begin{equation}\label{thm2:lwp} u\in C_t([0,T];  H^{1+\delta}(X))\cap
L^{2}([0,T];L^{\infty}(X)).\end{equation}
\end{theorem}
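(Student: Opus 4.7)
The plan is to run a contraction mapping in a Strichartz-based space adapted to Theorem~\ref{Strichartz} with $n=3$, $s=1+\delta$, $\theta=2\delta$ and output pair $(q,r)=(2,\infty)$: admissibility reads $1\le 1+\delta$ and the excluded endpoint $(2,\infty,3,0)$ is avoided since $\delta>0$, while the gap condition $\tfrac12=\tfrac{3+2\delta}{2}-s$ pins down precisely $s=1+\delta$. A Minkowski--Duhamel argument combined with the homogeneous $L^2_tL^\infty$ estimate then yields, on any finite interval $[0,T]$,
$$
\|u\|_{L^\infty_t H^{1+\delta}}+\|u\|_{L^2_t L^\infty}\lesssim \|u_0\|_{H^{1+\delta}}+\|u_1\|_{H^\delta}+\|F\|_{L^1_t H^\delta}.
$$
I would look for a fixed point in $X_T:=L^\infty_t([0,T];H^{1+\delta}(X))\cap L^2_t([0,T];L^\infty(X))$ of the map $\Phi(u):=u_L+\mathrm{Duhamel}(uDu+|u|^2u)$, where $u_L$ is the linear evolution of the data.

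Both source terms must be controlled in $L^1_t H^\delta$. The cubic one is routine from the fractional Leibniz rule on $(X,g)$, available through the spectral multiplier and Littlewood--Paley square function of \cite{Zhang}: the bound $\||u|^2u\|_{H^\delta}\lesssim\|u\|_{L^\infty}^2\|u\|_{H^\delta}$ gives $\||u|^2u\|_{L^1_t H^\delta}\lesssim\|u\|_{L^2_t L^\infty}^2\|u\|_{L^\infty_t H^{1+\delta}}\lesssim\|u\|_{X_T}^3$.

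The derivative nonlinearity $uDu$ is the heart of the matter, and the main obstacle. The naive symmetric Kato--Ponce bound $\|uDu\|_{H^\delta}\lesssim\|u\|_{L^\infty}\|Du\|_{H^\delta}+\|Du\|_{L^\infty}\|u\|_{H^\delta}$ requires $\|Du\|_{L^\infty}$, which is simply not available at regularity only $1+\delta$ in three dimensions (no admissible Strichartz pair reaches $W^{1,\infty}$, and $H^{1+\delta}\not\hookrightarrow W^{1,\infty}$). I would circumvent this by exploiting that a first-order scattering differential operator takes the local form $D=\sum_j a_jV_j+b$ with $V_j$ sc-vector fields and $a_j,b\in C^\infty(\overline X)$; a direct computation then gives a Leibniz identity with a zero-order error,
$$
D(u^2)=2uDu-bu^2,\qquad\text{hence}\qquad uDu=\tfrac12 D(u^2)+\tfrac12 bu^2.
$$
Since $D:H^{1+\delta}\to H^\delta$ is bounded and $b$ is a bounded multiplier on $H^\delta$, applying Kato--Ponce at the level $s=1+\delta$ to $u^2$ (which only invokes $\|u\|_{L^\infty}$ and $\|u\|_{H^{1+\delta}}$) produces
$$
\|uDu\|_{H^\delta}\lesssim\|u^2\|_{H^{1+\delta}}+\|u^2\|_{H^\delta}\lesssim\|u\|_{L^\infty}\|u\|_{H^{1+\delta}},
$$
and Cauchy--Schwarz in time yields the decisive gain $\|uDu\|_{L^1_t H^\delta}\lesssim T^{1/2}\|u\|_{L^2_t L^\infty}\|u\|_{L^\infty_t H^{1+\delta}}\lesssim T^{1/2}\|u\|_{X_T}^2$.

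To close the iteration for data of arbitrary size I would decompose $u=u_L+w$ and iterate for $w$ in a small ball of $X_T$: the key point is that $\|u_L\|_{L^2_t([0,T];L^\infty)}\to 0$ as $T\downarrow 0$ by dominated convergence (since $u_L\in L^2_t L^\infty(\mathbb R)$ by the global Strichartz estimate of Theorem~\ref{Strichartz}). Combined with the explicit $T^{1/2}$ from the $uDu$-estimate, this makes $\Phi$ a strict contraction on a ball of radius $R(T)\downarrow 0$ for $T$ sufficiently small, with $T$ depending only on $\|(u_0,u_1)\|_{H^{1+\delta}\times H^\delta}$. The difference estimates for two iterates are strictly parallel (each factor $\|u\|_{L^\infty}$ or $\|u\|_{H^{1+\delta}}$ becomes either $\|u-v\|$ or is kept bounded), giving the uniqueness and the regularity property \eqref{thm2:lwp}.
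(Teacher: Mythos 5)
Your proposal is correct but diverges from the paper in two substantive places. The paper does not use any Leibniz identity or fractional chain rule for the derivative term: it applies the inhomogeneous estimate of Theorem~\ref{Strichartz} directly with $\theta=2\delta$ and dual pair $(\tilde q',\tilde r')=\bigl(\tfrac1{1-\delta},2\bigr)$, so that the forcing is measured in $L^{1/(1-\delta)}_tL^2_z$ with no spatial derivative, and $\|uDu\|_{L^{1/(1-\delta)}_tL^2}\lesssim T^{1/2-\delta}\|u\|_{L^2_tL^\infty}\|Du\|_{L^\infty_tL^2}$ follows by plain H\"older in space and time (the cubic term uses $H^1\hookrightarrow L^6$ in dimension three). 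You instead derive the a priori bound with $\|F\|_{L^1_tH^\delta}$ on the right via Minkowski and the homogeneous estimate, which forces you to control $uDu$ in $H^\delta$. Your identity $uDu=\tfrac12 D(u^2)+\tfrac12 bu^2$ is correct (any first-order sc-differential operator is $V+b$ with $V\in\mathcal V_{\sca}(\overline X)$, $b\in C^\infty(\overline X)$) and is a clean way to dodge $\|Du\|_{L^\infty}$; but it then requires the fractional Leibniz rule at regularity $1+\delta>1$ and at the $p_j=\infty$ endpoint, both outside what Proposition~\ref{frule} states, so you should either peel off one sc-vector-field derivative and apply the proposition at level $\delta$, or say a word about the extension. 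In compensation, your route only invokes the unambiguously admissible dual pair $(\tilde q,\tilde r)=(\infty,2)$, whereas the paper's $(\tilde q,\tilde r)=(1/\delta,2)$ does not satisfy \eqref{adm} for any $\theta\in[0,1]$ (once $\tilde r=2$, \eqref{adm} forces $\tilde q=\infty$), so in that respect your version is the more conservative one. Finally, your closing step (iterating for $w=u-u_L$ in a shrinking ball, combining the $T^{1/2}$ gain on $uDu$ with the dominated-convergence smallness of $\|u_L\|_{L^2_t([0,T];L^\infty)}$) is a standard alternative to the paper's contraction on a fixed ball $\widetilde Y$ with $T$ small; both close.
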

\vspace{0.2cm}

This paper is organized as follows. In Section 2 we review the background of scattering manifold, the
results of the microlocalized spectral measure for the Laplacian and the square
function inequalities on this setting. Section 3 is devoted to the
proofs of the microlocalized dispersive estimates and
$L^2$-estimates. In Section 4, we prove the homogeneous and
inhomogeneous Strichartz estimates.  Finally, we apply the
Strichartz estimates to show Theorem \ref{thm1} and Theorem \ref{thm2}.\vspace{0.1cm}

{\bf Acknowledgments:}\quad  The authors would like to thank Andrew Hassell, Changxing Miao and Andras Vasy for their
helpful discussions and encouragement.  This work was supported by China Scholarship Council, National Natural
Science Foundation of China (11401024), and the European
Research Council, ERC-2012-ADG, project number 320845:  Semi
Classical Analysis of Partial Differential Equations.\vspace{0.2cm}

\section{Some analysis tools on scattering manifold}
In this section, we briefly recall the key elements of the
microlocalized spectral measure and a fundamental Littlewood-Paley squarefunction estimate. The first one was constructed by Hassell-Zhang \cite{HZ} to capture both its size and the
oscillatory behavior. The second one was proved in \cite{Zhang}. \vspace{0.2cm}

\subsection{Geometry setting}
Let us recall the manifold with scattering metric introduced by Melrose \cite{Melrose1}. There is many work to analyze the Laplacian operator on the scattering manifold, that is, asymptotically conic geometric setting;
see \cite{GHS1,GHS2,HW1,HTW,HZ}. Let $(X,g)$ be a complete
noncompact Riemannian manifold of dimension $n\geq2$ with one end,
diffeomorphic to $(0,\infty)\times Y$ where $Y$ is a smooth compact
connected manifold without boundary. Moreover, we assume
$(X,g)$ is scattering manifold which
means that $X$ allows a compactification $\overline{X}$ with
boundary, with $\partial {\overline{X}}=Y$, such that the metric $g$ becomes an
asymptotically conic metric on $\overline{X}$. In details, the metric $g$ in a
collar neighborhood $[0,\epsilon)_x\times \partial \overline{X}$ near $Y$ takes
the form of
\begin{equation}\label{metric}
g=\frac{\mathrm{d}x^2}{x^4}+\frac{h(x)}{x^2}=\frac{\mathrm{d}x^2}{x^4}+\frac{\sum
h_{jk}(x,y)dy^jdy^k}{x^2},
\end{equation}
where $x\in C^{\infty}(\overline{X})$ is a boundary defining function for
$\partial \overline{X}$ and $h$ is a smooth family of metrics on $Y$. Here we
use $y=(y_1,\cdots,y_{n-1})$ for local coordinates on $Y=\partial
M$, and the local coordinates $(x,y)$ on $\overline{X}$ near $\partial\overline{X}$. Away
from $\partial \overline{X}$, we use $z=(z_1,\cdots,z_n)$ to denote the local
coordinates. If $h_{jk}(x,y)=h_{jk}(y)$ is independent of $x$, we say
$\overline{X}$ is perfectly conic near infinity. Moreover if every geodesic
$z(s)$ in $\overline{X}$ reaches $Y$ as $s\rightarrow\pm\infty$, we say $\overline{X}$ is
nontrapping. The function $r:=1/x$ near $x=0$ can be thought of as a
``radial" variable near infinity and $y$ can be regarded as the $n-1$
``angular" variables; the metric is asymptotic to the exact
conic metric $((0,\infty)_r\times Y, dr^2+r^2h(0))$ as $r\rightarrow
\infty$. The Euclidean space $X=\mathbb{R}^n$ is an example of
an asymptotically conic manifold with $Y=\mathbb{S}^{n-1}$ and the
standard metric. 

\subsection{The Laplacian on scattering manifold}
Our setting is on the scattering manifold, we turn to the concepts of ``scattering geometry". For a full discussion of scattering geometry, we refer the reader to Melrose \cite{Melrose1}.
The space of \emph{sc-vector fields} is defined as $\mathcal{V}_{\mathrm{sc}}\overline{X})=x\mathcal{V}_b(\overline{X})$, 
where $\mathcal{V}_b(\overline{X})$ is the Lie algebra of all smooth vector fields on $\overline{X}\overline{X}$ which are tangent to the boundary.
The sc-vector field also forms a Lie algebra.
These sc-vector field can be realized as the sections of a
vector bundle $\leftidx{^{\scc}}{T\overline{X}}$, called the sc-tangent bundle.
That means $\mathcal{V}_{\scc}(\overline{X})=\mathcal{C}^\infty(\overline{X};\leftidx{^{\scc}}{T\overline{X}})$,
i.e. $\mathcal{V}_{\scc}(\overline{X})$ is a space of sections of
$\leftidx{^{\scc}}{T\overline{X}}$ the sc-tangent bundle over $\overline{X}$.  Using above notation in which
$x$ is the boundary defining function of $\overline{X}$ and $y$ are coordinates
in $\partial \overline{X}$, we have
\begin{equation*}\mathcal{V}_{\scc}(\overline{X})=\begin{cases}\mathcal{V}, ~\text{i.e. all $\mathcal{C}^\infty$-vector
fields}, &\text{in the interior} ~\overline{X};\\
\text{span}\{x^2\partial_x, x\partial_{y_1}\cdots,
x\partial_{y_{n-1}}\}, & \text{near the boundary} ~\partial \overline{X}.
\end{cases}
\end{equation*}

We denote by $\text{Diff}_{\text{sc}}^*(\overline{X})$ the `enveloping algebra' of $\mathcal{V}_{\scc}(\overline{X})$, meaning the ring of differential operator on $\mathcal{C}^\infty(\overline{X})$ generated by $\mathcal{V}_{\scc}(\overline{X})$ and $\mathcal{C}^\infty(\overline{X})$.
In particular, near the boundary $\partial \overline{X}$, the $k$-order scattering differential operator is given by
\begin{equation*}\text{Diff}_{\text{sc}}^k(\overline{X})=\Big\{\A: \A=\sum_{j+|\alpha|\leq
k}a_{j\alpha}(x,y)(x^2\partial_x)^j(x\partial_y)^\alpha,
a_{j\alpha}\in C^\infty(\overline{X}) \Big\}.
\end{equation*}
If $\alpha\in\R$, the $\alpha$ b-density bundle, denoted by
$\leftidx{^{\scc}}\Omega^\alpha$, is defined by
\begin{equation*}
\leftidx{^{\scc}}\Omega^\alpha X=\bigcup_{p\in
\overline{X}}\Omega^\alpha(\leftidx{^{\scc}}{T_p\overline{X}}).
\end{equation*}
In particular $\alpha=1/2$, it is convenient to regard such operators as acting on $\scc$-half densities, that is, multiples of a half-density 
taking the form $\left|\frac{dx}{x^2}\frac{dy_1}{x}\cdots \frac{dy_{n-1}}{x}\right|^{1/2}$. Correspondingly, the Schwartz kernels of such operators can be written as 
a distribution tensored with a  scattering half density in each of the left and right variables.

Define $\leftidx{^{\scc}}{T^*\overline{X}}$, the scattering cotangent bundle over
$\overline{X}$, to be the dual vector bundle to $\leftidx{^{\scc}}{T\overline{X}}$. 
Locally near the boundary, in the coordinate $(x,y)$, we have
\begin{equation*}\leftidx{^{\scc}}{T^*\overline{X}}=
\text{span}\Big\{\frac{\mathrm{d}x}{x^2},
\frac{\mathrm{d}{y}}{x}\Big\}=\text{span}\Big\{\mathrm{d}\big(\frac1x\big),
\frac{\mathrm{d}{y}}{x}\Big\}.
\end{equation*} Thus for any $ \alpha\in \leftidx{^{\scc}}{T^*\overline{X}}$ can
be written
\begin{equation*}\alpha=\tau\mathrm{d}\big(\frac1x\big)
+\mu\cdot\frac{\mathrm{d}{y}}{x},
\end{equation*}
and this gives a linear coordinates $(\tau,\mu)\in\R\times\R^{n-1}$
on each fiber of $\leftidx{^{\scc}}{T^*\overline{X}}$. Thus this also gives a
linear coordinates $(x,y; \tau,\mu)$ on
$\leftidx{^{\scc}}{T^*\overline{X}}$ near the boundary $\partial \overline{X}$. On the other
hand, if $(\xi,\eta)$ is the dual cotangent variables to $(x,y)$,
then
\begin{equation*}
\alpha=\xi\mathrm{d}x+\eta\cdot\mathrm{d}y
\end{equation*}
which implies $\tau=x^2\xi, \mu=x\eta$. We say $(\tau,\mu)$ as
rescaled cotangent variables. Hence this space of operators can be microlocalized by introducing \emph{ scattering pseudodifferential operators} which are formally objects given by $b(x,y,x^2\partial_x, x\partial_y)$
with $b(x,y,\tau,\mu)$ a Kohn-Nirenberg symbol on the bundle $\leftidx{^{\scc}}{T^*\overline{X}}$.

In the above coordinates,  the Laplacian can be written
\begin{equation}
\Delta_g=\sum_{j,k=1}^n\frac{1}{\sqrt{|g|}}\partial_j g^{j,k} \sqrt{|g|}\partial_k
\end{equation}
where $|g|$ is the determinant of the metric $g_{jk}$. To compare with the Euclidean space near the boundary, we write the metric near the boundary in 
the form $dr^2+r^2h(x,y, dy, r^{-2}dr)$ with respect to the local coordinates $r=1/x$ and $y$. Then the metric components satisfy
\begin{equation}
\begin{split}
&g_{00}=1+O(r^{-2}), g_{0,j}=O(1), g_{kj}=r^2(\tilde{h}_{k,j}+O(r^{-1}))\\&
g^{00}=1+O(r^{-2}), g^{0,j}=O(r^{-2}), g^{kj}=r^{-2}(\tilde{h}^{k,j}+O(r^{-1}))
\end{split}
\end{equation}
where $\tilde{h}$ is the induced metric on the boundary. Note that the cross term, with $j=0$ and $k\neq0$ or $j\neq0$ and $k=0$ vanish as $x^3$
when expressed in terms of $x\partial_x$ and $\partial_y$ (the components in $\mathcal{V}_b$).
Hence near the boundary we write 
\begin{equation}
\Delta_g=(x^2\partial_x)^2+(n-1)x^3\partial_x+x^2\Delta_h+x^3 \mathrm{Diff}_b^2(\overline{X})
\end{equation}
where $\mathrm{Diff}_b^2$ is the second order differential b-operator. In this sense, the Laplacian on this setting is a sc-differential operator.
To see more results about its resolvent and calculus, we refer to \cite{HV1,  Melrose}.

\subsection{The microlocalized spectral measure}In the free Euclidean space, the Klein-Gordon propagator can be written in
an explicit formula by using the Fourier transform, but in our setting it turns out to be quite
complicated. From the results of \cite{GHS1,HW}, we have known that
the Schwartz kernel of the spectral measure can be described as a
Legendrian distribution on the compactification of the space
$\overline{X}\times \overline{X}$ uniformly with respect to the spectral parameter
$\lambda$. As pointed out in introduction, we really need to choose
an operator partition of unity to microlocalize the spectral measure
such that the spectral measure can be expressed in a formula
capturing not only the size also the oscillatory behavior. This was
constructed and proved in \cite{HZ}. For convenience, we recall it here.

\begin{proposition}
\label{prop:localized spectral measure} Let $(X,g)$ and
$\mathrm{H}=-\Delta_g$ be in Theorem \ref{Strichartz}. Then there exists a $\lambda$-dependent  scattering pesudodifferential operator partition of unity on
$L^2(M)$
$$
\mathrm{Id}=\sum_{j=1}^{N}Q_j(\lambda),
$$
with $N$ independent of $\lambda$,
such that for each $1 \leq j \leq N$ we can write
\begin{equation}\label{beanQ}\begin{gathered}
(Q_j(\lambda)dE_{\sqrt{\mathrm{H}}}(\lambda)Q_j^*(\lambda))(z,z')=\lambda^{n-1} \Big(  \sum_{\pm} e^{\pm
i\lambda d(z,z')}a_\pm(\lambda,z,z') +  b(\lambda, z, z') \Big),
\end{gathered}\end{equation}
with estimates
\begin{equation}\label{bean}\begin{gathered}
\big|\partial_\lambda^\alpha a_\pm(\lambda,z,z') \big|\leq C_\alpha
\lambda^{-\alpha}(1+\lambda d(z,z'))^{-\frac{n-1}2},
\end{gathered}\end{equation}
\begin{equation}\label{beans}\begin{gathered}
\big| \partial_\lambda^\alpha b(\lambda,z,z') \big|\leq C_{\alpha, M}
\lambda^{-\alpha}(1+\lambda d(z,z'))^{-K} \text{ for any } K.
\end{gathered}\end{equation}
Here $d(\cdot, \cdot)$ is the Riemannian distance on $X$.

\end{proposition}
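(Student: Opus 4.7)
The plan is to derive the claimed microlocalized formula from Stone's formula together with the known Legendre-distribution structure of the outgoing and incoming resolvents on non-trapping scattering manifolds. I would begin with
\begin{equation*}
dE_{\sqrt{H}}(\lambda)=\frac{\lambda}{\pi i}\bigl(R(\lambda^2+i0)-R(\lambda^2-i0)\bigr),
\end{equation*}
so that the problem reduces to describing the boundary values of the resolvent. Results of Melrose--Zworski, Hassell--Vasy and Hassell--Wunsch show that on a suitable iterated blow-up of $\overline{X}\times\overline{X}$ these boundary values are Legendre distributions, with estimates uniform in $\lambda$, associated to the Legendrian propagated by the $\lambda$-rescaled geodesic flow at the scattering face. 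The outgoing/incoming oscillations $e^{\pm i\lambda/x}$ at the boundary correspond in the interior to the phase $\pm\lambda d(z,z')$, which is exactly the phase appearing in the statement.

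Next I would construct the operator partition $\{Q_j(\lambda)\}_{j=1}^{N}$. Fix a finite open cover $\{U_j\}$ of the unit sphere bundle of the scattering cotangent bundle, chosen fine enough that any geodesic trajectory whose initial data sit in $U_j$ remains, over its entire forward and backward evolution, in a single phase-space tube free of conjugate points. The non-trapping hypothesis guarantees that this can be done with $N$ independent of $\lambda$, since every bicharacteristic escapes the interior in uniformly controlled fashion. Quantizing a smooth partition of unity $\{q_j\}$ subordinate to $\{U_j\}$ as sc-pseudodifferential operators, with $\lambda^{-1}$ playing the role of a semiclassical parameter, yields $Q_j(\lambda)$ satisfying $\sum_j Q_j(\lambda)=\mathrm{Id}$.

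On each piece, the wavefront relation of $Q_j(\lambda)dE_{\sqrt{H}}(\lambda)Q_j(\lambda)^*$ is contained in the lifted Legendrian along which, by construction, the spatial projection is a local diffeomorphism. The Schwartz kernel can therefore be expressed as an oscillatory integral with a single phase $\pm\lambda d(z,z')$ and a classical amplitude produced by transport equations. The overall prefactor $\lambda^{n-1}$ is the Weyl normalization of $dE_{\sqrt{H}}$; stationary phase in the $(n-1)$ transverse spherical directions supplies the factor $(1+\lambda d(z,z'))^{-(n-1)/2}$ in $a_\pm$, while $\lambda$-derivatives cost at most $\lambda^{-\alpha}$ because $\lambda d$ is the natural large parameter. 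The rapidly decaying remainder $b$ collects the off-characteristic and small-$\lambda d$ contributions, whose symbolic bounds follow by repeated integration by parts in the oscillatory representation.

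The principal obstacle is the partition construction itself: one must simultaneously guarantee that each $Q_j(\lambda)$ lies in the scattering pseudodifferential calculus, that every microsupport is fine enough to exclude conjugate points along the \emph{entire} propagated Legendrian, that $N$ is independent of $\lambda$, and that all symbolic bounds are uniform from $\lambda\to 0$ up to $\lambda\to\infty$. Both the non-trapping geometry and the uniform Legendrian structure of $R(\lambda^2\pm i0)$ up to the scattering face are indispensable here; without either one loses uniform control of the amplitudes $a_\pm$, and the clean decomposition above breaks down.
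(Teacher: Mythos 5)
The paper does not actually prove this proposition; it is quoted verbatim from Hassell--Zhang \cite{HZ} (see the remark immediately preceding the proposition: ``This was constructed and proved in \cite{HZ}. For convenience, we recall it here.''). So there is no in-paper proof to compare against, and the fair question is whether your sketch is a reasonable outline of the argument carried out in \cite{HZ}.

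At the level of a sketch it is, in broad strokes, the right picture: Stone's formula reduces the problem to the incoming/outgoing resolvents, these are known to be Legendre distributions (with a precise sense of ``classical amplitude'' on an appropriate blown-up double space), and the operators $Q_j(\lambda)$ are chosen so that on the microsupport of $Q_j\,dE\,Q_j^*$ the spatial projection of the propagated Legendrian is injective (no conjugate points, no multiple geodesics), after which stationary phase in the $n-1$ transverse directions gives the $(1+\lambda d(z,z'))^{-(n-1)/2}$ decay. This is consistent with \cite{HZ}, where the partition is built from the structure theorem for the spectral measure as a Legendre distribution.

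Where your sketch is materially incomplete is the treatment of the two frequency regimes. You present the construction as a single semiclassical quantization with parameter $h=\lambda^{-1}$, acknowledging only at the end that uniformity ``from $\lambda\to 0$ up to $\lambda\to\infty$'' is needed. In \cite{HZ} these are genuinely different constructions on different compactified model spaces: the high-energy part rests on the semiclassical parametrix of Hassell--Wunsch \cite{HW} on the space $\MMksc$, while the low-energy part rests on the resolvent/spectral-measure analysis of Guillarmou--Hassell--Sikora \cite{GHS1} on the $b$-blown-up double space, and the two halves are glued. In particular, as $\lambda\to 0$ there is no semiclassical scaling and one must control the kernel uniformly down to the zero-energy face; the claim that a single cover of the unit sphere bundle of the scattering cotangent bundle, independent of $\lambda$, produces sc-$\Psi$DOs with the required uniformity across all $\lambda$ is a nontrivial output of that two-regime analysis, not an a priori input. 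Likewise, the remainder $b$ in \eqref{beanQ} does not merely collect ``off-characteristic'' and ``small $\lambda d$'' pieces: in \cite{HZ} it encodes the part of the kernel supported near the diagonal and away from the Legendrian, and its rapid decay in $\lambda d$ is a separate estimate. So the sketch is directionally right but would not pass as a proof; the honest citation the paper gives to \cite{HZ} is the correct move.
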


From this proposition, we can exploit the oscillations both in the
multiplier $e^{i(t-s)\sqrt{1+\lambda^2}}$ and in $e^{\pm i\lambda d(z,z')}$ to
obtain the required dispersive estimate for the $TT^*$ version of
the microlocalized propagator.

\subsection{The Littlewood-Paley squarefunction estimate} In \cite{Zhang}, we showed the Gaussian upper bounds on the heat kernel by
using the local-in-time heat kernel bounds in Cheng-Li-Yau
\cite{CLY}, and Guillarmou-Hassell-Sikora's \cite{GHS2} restriction
estimate for low frequency. Hence we finally proved the Littlewood-Paley squarefunction
estimate on this setting by using a spectral multiplier estimate
in Alexopoulos \cite{Alex} and Stein's \cite{Stein} classical
argument involving Rademacher functions. Now we recall the result here for convenience. 

Let $\phi\in C_0^\infty(\mathbb{R}\setminus\{0\})$ take values in
$[0,1]$ and be supported in $[1/2,2]$ such that
\begin{equation}\label{dp}
1=\sum_{j\in\Z}\phi(2^{-j}\lambda),\quad\lambda>0.
\end{equation}
Define $\phi_0(\lambda)=\sum_{j\leq0}\phi(2^{-j}\lambda)$.
The result about the Littlewood-Paley squarefunction estimate
reads as follows:
\begin{proposition}\label{prop:square} Let $(X,g)$ be a scattering
manifold, trapping or not, and $\mathrm{H}=-\Delta_g$ is the
Laplace-Beltrami operator on $(X,g)$. Then for $1<p<\infty$,
there exist constants $c_p$ and $C_p$ depending on $p$ such that
\begin{equation}\label{square}
c_p\|f\|_{L^p(X)}\leq
\big\|\big(\sum_{j\in\Z}|\phi(2^{-j}\sqrt{\mathrm{H}})f|^2\big)^{\frac12}\big\|_{L^p(X)}\leq
C_p\|f\|_{L^p(X)}.
\end{equation}
\end{proposition}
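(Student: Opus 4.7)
The plan is to follow the classical three-step route through heat kernel Gaussian bounds, a Mihlin--H\"ormander type spectral multiplier theorem, and Stein's Rademacher function randomization.

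First I would establish a global Gaussian upper bound on the heat kernel $e^{-tH}(z,z')$ of the form
\begin{equation*}
|e^{-tH}(z,z')| \leq C t^{-n/2} \exp\bigl(-c\, d(z,z')^2/t\bigr), \quad t>0,\ z,z'\in X.
\end{equation*}
For short times $0<t\le 1$, I would invoke the local-in-time Li--Yau type estimates of Cheng--Li--Yau, which apply on any complete Riemannian manifold with Ricci bounded below and use only local geometric data. For large times $t\ge 1$, on an asymptotically conic non-trapping manifold the heat kernel behavior is controlled by the spectral measure near zero, and here I would use the Guillarmou--Hassell--Sikora low-energy restriction/resolvent bounds to deduce the required long-time Gaussian estimate. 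Patching the two regimes yields the global bound.

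Given the Gaussian upper bound, the next step is a spectral multiplier theorem: any bounded $F:\mathbb{R}_+\to\mathbb{C}$ satisfying a Mihlin-type estimate
\begin{equation*}
\sup_{\lambda>0} \bigl|\lambda^k F^{(k)}(\lambda)\bigr| < \infty, \qquad 0\le k \le \lceil n/2\rceil + 1,
\end{equation*}
(or an analogous compactly supported Sobolev-norm condition) defines an operator $F(\sqrt{H})$ which is bounded on $L^p(X)$ for all $1<p<\infty$ and of weak type $(1,1)$. This is precisely the content of Alexopoulos's multiplier theorem, whose hypotheses (doubling volume and Gaussian heat kernel bound) are satisfied here, the doubling being automatic from the asymptotically conic structure.

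For the final step I would introduce Rademacher functions $\{r_j(s)\}_{j\in\mathbb{Z}}$ on $[0,1]$ and consider the randomized multiplier
\begin{equation*}
m_s(\lambda) = \sum_{j\in\mathbb{Z}} r_j(s)\, \phi(2^{-j}\lambda).
\end{equation*}
Because the $\phi(2^{-j}\cdot)$ have disjoint (or barely overlapping) dyadic supports, $m_s$ satisfies the Mihlin condition uniformly in $s\in[0,1]$, so the spectral multiplier theorem applied to $m_s(\sqrt{H})$ gives, uniformly in $s$,
\begin{equation*}
\Bigl\|\sum_{j} r_j(s)\,\phi(2^{-j}\sqrt{H}) f\Bigr\|_{L^p(X)} \le C_p \|f\|_{L^p(X)}.
\end{equation*}
Integrating the $p$-th power in $s$ and invoking Khintchine's inequality then produces the upper square function bound. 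The lower bound follows from the upper bound by a standard duality argument using the resolution of identity $\sum_j \phi(2^{-j}\sqrt{H}) = \mathrm{Id}$ on $L^2$ together with polarization, exactly as in Stein's treatment.

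The main obstacle I anticipate is the large-time Gaussian heat kernel estimate: near the scattering end the metric is only asymptotically conic and the low-energy spectral asymptotics are delicate. This is where the Guillarmou--Hassell--Sikora analysis of the resolvent on the zero-energy blown-up double space is essential, and carrying the resulting spectral bounds back to a pointwise off-diagonal Gaussian estimate is the technically substantial part; the remaining multiplier and Rademacher steps are then essentially soft.
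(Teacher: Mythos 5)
Your proposal follows essentially the same route as the paper (and the underlying reference \cite{Zhang}): Gaussian heat kernel bounds obtained from Cheng--Li--Yau for short time and Guillarmou--Hassell--Sikora's low-energy analysis for long time, then Alexopoulos's spectral multiplier theorem, and finally Stein's Rademacher randomization for the square function. One small remark: the proposition is stated for scattering manifolds ``trapping or not,'' so you should not invoke non-trapping in the long-time heat kernel step---the low-energy resolvent/restriction bounds of Guillarmou--Hassell--Sikora do not require non-trapping, since trapping only affects the high-energy regime.
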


One important application of the traditional Littlewood--Paley theory is the proof of Leibniz (=product) and chain rules for
differential operators of non-integer order.  For example, if $1<p,p_j<\infty$ with $j=1,\cdots 4$ and $s>0$, then
$$
\| f g \|_{H^{s,p}(\R^n)} \lesssim \| f \|_{H^{s,p_1}(\R^n)} \| g \|_{L^{p_2}(\R^n)} + \| f \|_{L^{p_3}(\R^n)}\| g \|_{H^{s,p_4}(\R^n)}
$$
whenever $\frac1p=\frac1{p_1}+\frac1{p_2}=\frac1{p_3}+\frac1{p_4}$.  For a textbook presentation of these theorems and original references, see \cite{Taylor}.
The Leibniz chain rules is a basic tool in the proof of well-posedness. Since we have heat kernel estimate with Gaussian upper bounds and the Littlewood-Paley squarefunction estimate,
the Leibniz chain rules can be obtained by similar argument in Euclidean space and it also was proved in \cite[Theorem 27]{CRT}. We record here
\begin{proposition}\label{frule} Let $ H^{s,p}(X)={(1-\Delta_g)}^{-\frac{s}2}L^p(X)$ be
the inhomogeneous Sobolev space over $X$. Then we have for $0\leq s\leq1$
\begin{equation}
\| f g \|_{H^{s,p}(X)} \lesssim \| f \|_{H^{s,p_1}(X)} \| g \|_{L^{p_2}(X)} + \| f \|_{L^{p_3}(X)}\| g \|_{H^{s,p_4}(X)}
\end{equation}
where $1<p,p_j<\infty$ with $j=1,\cdots 4$ such that $\frac1p=\frac1{p_1}+\frac1{p_2}=\frac1{p_3}+\frac1{p_4}$.
\end{proposition}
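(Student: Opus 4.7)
The plan is to lift the classical paraproduct proof of the fractional Leibniz rule from $\mathbb{R}^n$ to $(X,g)$, with Fourier truncations replaced by the spectral multipliers $\phi_j(\sqrt{H})$ from Proposition \ref{prop:square}. The opening move is to combine the square function estimate with the Alexopoulos-type spectral multiplier theorem used in \cite{Zhang} to obtain the equivalent norm
$$\|u\|_{H^{s,p}(X)} \sim \|\phi_0(\sqrt{H})u\|_{L^p(X)} + \Big\|\Big(\sum_{j \geq 1} 4^{js}|\phi_j(\sqrt{H})u|^2\Big)^{1/2}\Big\|_{L^p(X)}$$
valid for $0\leq s\leq 1$ and $1<p<\infty$. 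This reduces the claim to controlling each frequency-localized piece $\phi_l(\sqrt{H})(fg)$ in $L^p$ by one factor measured in $H^{s,\cdot}$ and the other in $L^\cdot$.

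Next, I would introduce a paraproduct decomposition $fg = \Pi_{\lo\hi}(f,g) + \Pi_{\hi\lo}(f,g) + \Pi_{\hi\hi}(f,g)$, where, for example, $\Pi_{\lo\hi}(f,g) = \sum_j f_{<j-N}\, g_j$ with $f_{<j-N} = \phi_0(\sqrt{H})f + \sum_{k<j-N}\phi_k(\sqrt{H})f$ and $g_j = \phi_j(\sqrt{H})g$, and the other pieces handled symmetrically. After applying $\phi_l(\sqrt{H})$, the low-high term should be concentrated at $l\sim j$, which after applying the square function characterization yields (schematically)
$$\|\Pi_{\lo\hi}(f,g)\|_{H^{s,p}(X)} \lesssim \Big\|\Big(\sum_j 4^{js}|f_{<j-N}|^2\,|g_j|^2\Big)^{1/2}\Big\|_{L^p(X)} \lesssim \|\mathcal{M}f\|_{L^{p_3}(X)}\,\|g\|_{H^{s,p_4}(X)},$$
where $\mathcal{M}$ is the Hardy-Littlewood maximal function associated with $g$, which is bounded on $L^{p_3}$ by the volume doubling property that accompanies the Gaussian heat kernel bounds from \cite{Zhang}. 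The diagonal part $\Pi_{\hi\hi}$ is then handled by Cauchy-Schwarz on the near-diagonal indices $|j-k| \leq N$, exactly as in the Euclidean argument.

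The main technical obstacle is the almost orthogonality statement $\phi_l(\sqrt{H})(f_{<j-N}\, g_j) \approx 0$ for $|l-j|$ large: on $\mathbb{R}^n$ this is immediate from the Fourier support of $\widehat{f_{<j-N}\,g_j}$, whereas on $X$ one must exploit the Gaussian off-diagonal decay of the kernels of $\phi_l(\sqrt{H})$ obtained from the heat kernel bounds of \cite{Zhang} via functional calculus. Quantifying the resulting commutator-type error so that it sums in $l$ is the only step that departs substantively from the Euclidean treatment; this is precisely what is carried out in \cite[Theorem 27]{CRT} for general metric measure spaces satisfying volume doubling and Gaussian heat kernel bounds. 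Since both hypotheses hold for $(X,g)$ by the results recalled in \cite{Zhang}, one may either invoke that theorem directly or run the paraproduct estimates above with the almost orthogonality supplied by these kernel bounds.
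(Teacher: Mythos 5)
Your proposal takes essentially the same route as the paper, which observes that the Gaussian heat kernel upper bounds and the Littlewood--Paley square function estimate from \cite{Zhang} permit the Euclidean paraproduct argument to be transported to $(X,g)$ and then cites \cite[Theorem 27]{CRT} for a complete proof. Your sketch of the paraproduct decomposition, and your identification of off-diagonal kernel decay (in place of exact Fourier support disjointness) as the one manifold-specific ingredient, accurately fill in the details the paper leaves implicit.
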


\section{$L^2$-estimates and dispersive estimates}
In this section, we prove the $L^2$-estimates for $U_{j,k}(t)$ and dispersive
estimates for $U_{j,k}(t)U_{j,k}^*(s)$ where $U_{j,k}(t)$ is a micro-localized Klein-Gordon propagator.  The $L^2$-estimate is showed by the spectral
theory on Hilbert space. The conjugate points are separated in the
microlocalized propagators, and hence we can prove
the $TT^*$ version dispersive estimates. Since the abstract Klein-Gordon propagator $U(t)=e^{it\sqrt{1-\Delta_g}}$  behaviors likely 
the Schr\"odinger at low frequency and likes the wave at high frequency, we need to establish dispersive estimate by using different arguments at different frequency. \vspace{0.2cm}

\subsection{Microlocalized propagator}
We start by dividing the Klein-Gordon propagator into a low-energy
piece and a high-energy piece. Using the dyadic partition of unity $1=\sum_{k\in\Z}\phi(2^{-k}\lambda)$
we further define
\begin{equation}
\begin{split}
U_k(t) &= \int_0^\infty e^{it\sqrt{1+\lambda^2}}
\phi(2^{-k}\lambda) dE_{\sqrt{\mathrm{H}}}(\lambda),\quad k\in\Z
\end{split}
\end{equation}
Further using scattering psedodifferential operator partition of
identity operator in Proposition \ref{prop:localized spectral measure}, we define
\begin{equation}\label{Uij}
\begin{gathered}
U_{j,k}(t) = \int_0^\infty e^{it\sqrt{1+\lambda^2}}
\phi(2^{-k}\lambda) Q_j(\lambda)dE_{\sqrt{\mathrm{H}}}(\lambda),\quad 1\leq j\leq N, k\in\Z.
\end{gathered}
\end{equation}
We divide the microlocalized Klein-Gordon propagator into low frequency and high frequency 
\begin{equation}\label{Ulh}
\begin{gathered}
U_{j}^{\mathrm{low}}(t) = \int_0^\infty e^{it\sqrt{1+\lambda^2}}
\phi_0(\lambda) Q_j(\lambda)dE_{\sqrt{\mathrm{H}}}(\lambda),\quad 1\leq j\leq N;\\
U_{j}^{\mathrm{high}}(t) = \sum_{k=0}^\infty\int_0^\infty e^{it\sqrt{1+\lambda^2}}
\phi(2^{-k}\lambda) Q_j(\lambda)dE_{\sqrt{\mathrm{H}}}(\lambda),\quad 1\leq j\leq N.
\end{gathered}
\end{equation}

\subsection{$L^2$-estimate for $U_{j,k}(t)$.} In this subsection we show this definition is well-defined and prove $U_{j,k}(t)$ is
a bounded operator on $L^2(X)$. Essentially this has been proved in \cite[Proposition 3.2]{Zhang}. For convenience, we sketch it here. Indeed it suffices
to show the above integrals are well defined over any compact
interval in $(0, \infty)$.  Let $A(\lambda) = e^{it\sqrt{1+\lambda^2}}
 \phi(2^{-k})Q_j(\lambda)$. Then
$A(\lambda)$ is a family of
bounded operators on $L^2(X)$, compactly supported in $[2^{k-1},2^{k+1}]$
and $\mathcal{C}^1$ in $\lambda\in (0,\infty)$. Integrating by parts, the
integral of
$$
\int_{2^{k-1}}^{2^{k+1}} A(\lambda) dE_{\sqrt{\mathrm{H}}}(\lambda)
$$
is given by
\begin{equation}\label{mean}
E_{\mathrm{\sqrt{\mathrm{H}}}}(2^{k+1}) A(2^{k+1}) - E_{\mathrm{\sqrt{\mathrm{H}}}}(2^{k-1})
A(2^{k-1}) - \int_{2^{k-1}}^{2^{k+1}} \frac{d}{d\lambda} A(\lambda)
E_{\sqrt{\mathrm{H}}}(\lambda) \, d\lambda.
\end{equation}
Hence the operators $U_{j,k}(t)$ are
well-defined by using the following lemma which is the consequence of
\cite[Lemma 2.3, Lemma 3.1]{HZ}.

\begin{lemma}\label{QQ'}
Each $Q_j(\lambda)$ and each operator $\lambda
\partial_\lambda Q_j(\lambda)$ is bounded on $L^2(X)$
uniformly in $\lambda$.\end{lemma}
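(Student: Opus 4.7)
The plan is to reduce the two statements to standard $L^2$-boundedness for scattering pseudodifferential operators and then to verify, using the specific form of the $Q_j(\lambda)$ constructed in \cite{HZ}, that the relevant symbol seminorms are uniform in $\lambda$.

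First, I would recall the construction of $Q_j(\lambda)$ from \cite{HZ}. Each $Q_j(\lambda)$ is a scattering pseudodifferential operator microlocalized to a small conic piece of the characteristic variety $\{|\zeta|_g = \lambda\} \subset \leftidx{^{\scc}}{T^*\overline{X}}$, so that conjugate pairs are separated on the support of $Q_j(\lambda) \otimes Q_j(\lambda)^*$. Concretely, after rescaling by $\lambda$ in the fiber, the total symbol $q_j(z, \zeta; \lambda)$ may be written as $\tilde q_j(z, \zeta/\lambda)$ times a cutoff, with $\tilde q_j$ smooth and compactly supported on a fixed subset of the rescaled cotangent bundle (independent of $\lambda$), and with uniform symbol estimates of scattering order $(0,0)$.

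Second, for the uniform $L^2$-bound on $Q_j(\lambda)$, I would invoke the Calder\'on--Vaillancourt theorem in the scattering calculus: any operator whose full symbol lies in a bounded subset of the symbol class $S^{0,0}_{\scc}(\leftidx{^{\scc}}{T^*\overline{X}})$ is bounded on $L^2(X)$ with norm controlled by finitely many symbol seminorms. Since the $\lambda$-dependent symbol of $Q_j(\lambda)$ lies in such a bounded subset (this is precisely the content of \cite[Lemma 2.3]{HZ}), one concludes that $\|Q_j(\lambda)\|_{L^2\to L^2}$ is bounded uniformly in $\lambda > 0$.

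Third, for $\lambda\partial_\lambda Q_j(\lambda)$, the key observation is that differentiating in $\lambda$ and multiplying by $\lambda$ preserves the symbol class: if $q_j(z,\zeta;\lambda) = \tilde q_j(z, \zeta/\lambda)$, then
\begin{equation*}
\lambda \partial_\lambda q_j(z,\zeta;\lambda) = -\frac{\zeta}{\lambda} \cdot (\nabla_\eta \tilde q_j)(z, \zeta/\lambda),
\end{equation*}
which is again compactly supported in $\zeta/\lambda$ and has uniform $S^{0,0}_{\scc}$ seminorms. Hence Calder\'on--Vaillancourt again yields a uniform bound, which is the content of \cite[Lemma 3.1]{HZ}. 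The main obstacle, already handled in \cite{HZ}, is the bookkeeping needed to show that the specific partition used in \eqref{beanQ} (chosen to separate conjugate points and make the half-density Schwartz kernels oscillatory in a controlled way) genuinely produces symbols with $\lambda$-uniform seminorms; granting that, the $L^2$ boundedness is routine. I would therefore simply cite those two lemmas.
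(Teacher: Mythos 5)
Your proposal is essentially the same as the paper's: the paper simply states that Lemma~\ref{QQ'} is a consequence of \cite[Lemma 2.3, Lemma 3.1]{HZ}, and you likewise reduce to citing those two lemmas. The added exposition (viewing $Q_j(\lambda)$ as scattering $\Psi$DOs with $\lambda$-uniform $S^{0,0}_{\scc}$ seminorms and invoking a Calder\'on--Vaillancourt-type $L^2$ bound) is a reasonable gloss on what those lemmas establish, though the explicit symbol formula $q_j(z,\zeta;\lambda)=\tilde q_j(z,\zeta/\lambda)$ is an oversimplification of the \cite{HZ} construction at low energy, where the partition is built on a blown-up resolvent space; since you explicitly defer to the cited lemmas for that bookkeeping, this is not a gap.
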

Since $\|U_{j,k}\|_{L^2\to L^2}\leq C$ is 
equivalent to $\|U_{j,k}U^*_{j,k}\|_{L^2\to L^2}\leq C$,  we compute by  \cite[Lemma 5.3]{HZ},
\begin{equation}\begin{gathered}
U_{j,k}(t) U_{j,k}(t)^* = \int   \phi\big(
\frac{\lambda}{2^k} \big) \phi\big( \frac{\lambda}{2^k} \big)
Q_j(\lambda) dE_{\sqrt{\mathrm{H}}}(\lambda) Q_j(\lambda)^* \\
= -\int \frac{d}{d\lambda} \Big(  \phi\big(
\frac{\lambda}{2^k} \big) \phi\big( \frac{\lambda}{2^k} \big)
Q_j(\lambda) \Big) E_{\sqrt{\mathrm{H}}}(\lambda) Q_j(\lambda)^*  \\
- \int \phi\big( \frac{\lambda}{2^k} \big)
\phi\big( \frac{\lambda}{2^k} \big) Q_j(\lambda)
E_{\sqrt{\mathrm{H}}}(\lambda) \frac{d}{d\lambda}
Q_j(\lambda)^*.
\end{gathered}\label{Uijk}\end{equation}

We observe that this is independent of $t$ and we also note that the
integrand is a bounded operator on $L^2$, with an operator bound of
the form $C/\lambda$ where $C$ is uniform, as we see from
Lemma~\ref{QQ'} and the support property of $\phi$. The integral
is therefore uniformly bounded, as we are integrating over a dyadic
interval in $\lambda$. Hence we have shown that
\begin{proposition}[$L^2$-estimates]\label{energy} Let $U_{j,k}(t)$ be defined in \eqref{Uij}.
Then there exists a constant $C$ independent of $t, z, z'$ such that
$\|U_{j,k}(t)\|_{L^2\rightarrow L^2}\leq C$ for all $j\geq 1,
k\in\Z$.
\end{proposition}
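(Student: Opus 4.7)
The plan is to follow the classical $TT^*$ strategy: prove instead the uniform bound $\|U_{j,k}(t) U_{j,k}(t)^*\|_{L^2 \to L^2} \leq C$, which is equivalent to the desired estimate. The main advantage is that in $U_{j,k}(t)U_{j,k}(t)^*$ the factors $e^{it\sqrt{1+\lambda^2}}$ and $e^{-it\sqrt{1+\lambda^2}}$ multiply to $1$, so the operator is manifestly independent of $t$ and reduces to a spectral average of the cutoff $\phi(2^{-k}\lambda)^2 Q_j(\lambda) \, dE_{\sqrt{\mathrm{H}}}(\lambda) \, Q_j(\lambda)^*$ over the dyadic window $\lambda \in [2^{k-1}, 2^{k+1}]$.

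Next I would integrate by parts in $\lambda$, moving the derivative off of $E_{\sqrt{\mathrm{H}}}(\lambda)$, exactly as displayed in \eqref{Uijk}. This produces boundary terms of the form $\phi(2^{-k}\lambda)^2 Q_j(\lambda) E_{\sqrt{\mathrm{H}}}(\lambda) Q_j(\lambda)^*$ evaluated at the endpoints of the window (which vanish because $\phi$ is supported in $[1/2,2]$), plus two interior integrals in which $\frac{d}{d\lambda}$ falls either on $\phi(2^{-k}\lambda)^2 Q_j(\lambda)$ or on $Q_j(\lambda)^*$.

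To bound the resulting integrals, I would use three ingredients: (i) the spectral theorem, giving $\|E_{\sqrt{\mathrm{H}}}(\lambda)\|_{L^2\to L^2} \leq 1$ uniformly; (ii) Lemma~\ref{QQ'}, which supplies $\|Q_j(\lambda)\|_{L^2\to L^2} \leq C$ and $\|\lambda\partial_\lambda Q_j(\lambda)\|_{L^2\to L^2} \leq C$ uniformly in $\lambda$; and (iii) the elementary bound $|\partial_\lambda \phi(2^{-k}\lambda)| \lesssim 2^{-k}$. Combining these shows that each interior integrand has operator norm $\lesssim \lambda^{-1}$ on the support of $\phi(2^{-k}\lambda)$, i.e.\ $\lesssim 2^{-k}$. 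Integrating this against $d\lambda$ over an interval of length $\sim 2^k$ yields a uniform $O(1)$ bound, independent of $k$ (and trivially of $j$, since $1\leq j\leq N$ is a finite range).

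The only subtle point, and the one I expect to be the main obstacle, is verifying that the $L^2$-integral defining $U_{j,k}(t)$ is convergent in the strong operator topology in the first place, so that the formal integration by parts in \eqref{mean} is justified. This requires knowing that $\lambda \mapsto e^{it\sqrt{1+\lambda^2}} \phi(2^{-k}\lambda) Q_j(\lambda)$ is $\mathcal{C}^1$ in $\lambda$ as a map into bounded operators on $L^2(X)$, which again reduces to Lemma~\ref{QQ'}. Once this is in place, the argument becomes a routine application of spectral theory combined with the dyadic-localization budget.
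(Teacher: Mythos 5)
Your proof is correct and follows essentially the same route as the paper's: both reduce to the $TT^*$ bound, observe that the $t$-dependent phases cancel so the operator is $t$-independent, integrate by parts exactly as in \eqref{Uijk} (with boundary terms vanishing by the support of $\phi$), and then invoke Lemma~\ref{QQ'} together with the spectral theorem to get a $C/\lambda$ bound on the integrand, which integrates to $O(1)$ over the dyadic window. The well-definedness issue you flag at the end is also addressed the same way in the paper, via \eqref{mean} and the $\mathcal{C}^1$ regularity supplied by Lemma~\ref{QQ'}.
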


Since there is no difference between $e^{it\lambda^2}$ and $e^{it\sqrt{1+\lambda^2}}$ in the proof \cite[Proposition 5.1]{HZ} (using a almost orthogonal property in 
the summation of $k$), 
we have
\begin{proposition}[$L^2$-estimates]\label{energy'} Let $U^{\mathrm{low}}_{j}(t)$ be defined in \eqref{Ulh}.
Then there exists a constant $C$ independent of $t, z, z'$ such that
$\|U^{\mathrm{low}}_{j}(t)\|_{L^2\rightarrow L^2}\leq C$ for all $j\geq 1$.
\end{proposition}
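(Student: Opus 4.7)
The plan is to repeat the argument of \cite[Proposition 5.1]{HZ}, where the analogous bound is established for the Schr\"odinger low-frequency propagator (phase $e^{it\lambda^2}$ rather than $e^{it\sqrt{1+\lambda^2}}$). As the author's remark indicates, the only place in that proof where the phase enters is the $TT^*$ computation, and the cancellation there works identically for the Klein-Gordon phase.

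Explicitly, writing each spectral integral as a Riemann sum so that disjoint spectral increments annihilate, one obtains
\[
U_j^{\mathrm{low}}(t)\, U_j^{\mathrm{low}*}(t) \;=\; \int_0^\infty \phi_0(\lambda)^2\, Q_j(\lambda)\, dE_{\sqrt{\mathrm{H}}}(\lambda)\, Q_j(\lambda)^*,
\]
in which the two factors $e^{it\sqrt{1+\lambda^2}}$ and $e^{-it\sqrt{1+\lambda^2}}$ have cancelled (exactly as $e^{it\lambda^2}\cdot e^{-it\lambda^2}=1$ cancels in the Schr\"odinger case). The right-hand side is $t$-independent, so it remains to show that this operator is uniformly bounded on $L^2(X)$.

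To this end, dyadically decompose $\phi_0 = \sum_{k \leq 0} \phi(2^{-k}\,\cdot\,)$, so that the above integral equals $\sum_{k, k' \leq 0} U_{j,k}(t)\, U_{j,k'}(t)^*$, with only terms satisfying $|k-k'|\leq 2$ contributing (since $\mathrm{supp}\,\phi \subset [1/2, 2]$). Each diagonal and near-diagonal term is estimated by the integration-by-parts identity \eqref{Uijk} exactly as in the proof of Proposition~\ref{energy}, using $\|E_{\sqrt{\mathrm{H}}}(\lambda)\|_{L^2 \to L^2} \leq 1$ together with the bounds on $Q_j(\lambda)$ and $\lambda\partial_\lambda Q_j(\lambda)$ from Lemma~\ref{QQ'}. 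The principal difficulty is that the naive triangle-inequality bound $\sum_{k \leq 0}\|U_{j,k}(t) U_{j,k'}(t)^*\|$ diverges; this is resolved by the almost-orthogonality argument of \cite[Proposition 5.1]{HZ}, which treats the whole $\sum_{k, k'}$ as a single spectral integral over the bounded $\lambda$-support of $\phi_0^2$ rather than estimating term by term, and thereby recovers the uniform $O(1)$ bound. Since neither the near-diagonal IBP step nor this almost-orthogonality step sees the phase, the conclusion follows verbatim from the Schr\"odinger analysis.
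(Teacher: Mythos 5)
Your proof is correct and follows the same route as the paper, which itself simply cites \cite[Proposition 5.1]{HZ} and observes that replacing the phase $e^{it\lambda^2}$ by $e^{it\sqrt{1+\lambda^2}}$ changes nothing because the phase cancels in the $TT^*$ product. You make that cancellation explicit (the $TT^*$ kernel is $t$-independent) and correctly identify that the obstruction is the divergence of a naive $\sum_{k\leq 0}$ of the $O(1)$ dyadic bounds from Proposition~\ref{energy}, which is precisely the point the paper defers to the almost-orthogonality argument in \cite{HZ} --- so the level of detail in your last step matches what the paper itself provides.

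One small expository caution: your closing sentence describes the almost-orthogonality step as ``treating the whole $\sum_{k,k'}$ as a single spectral integral,'' but that reduction is already accomplished by the $TT^*$ identity you wrote in the first display; the genuine content of \cite[Proposition 5.1]{HZ} is the additional cancellation that bounds that single integral (a naive integration by parts over $[0,1]$ produces a divergent $\int_0^1 \lambda^{-1}\,d\lambda$ from the $\|\lambda\partial_\lambda Q_j(\lambda)\|\leq C$ bound). Since both you and the paper outsource that step to \cite{HZ}, this is a phrasing issue rather than a mathematical gap, but it would be cleaner to say the almost-orthogonality supplies the extra cancellation beyond what Lemma~\ref{QQ'} alone gives.
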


\subsection{Dispersive estimates} 

In this subsection, we use stationary phase argument and Proposition
\ref{prop:localized spectral measure} to establish the
microlocalized dispersive estimates. Before doing this, we prove a 
fundamental result on decay estimate.

\begin{proposition}[Microlocalized dispersive estimates for low frequency]\label{dispersive-l}
Let $Q_j(\lambda)$ be in Proposition \ref{prop:localized spectral measure}. Then for all integers
$j\geq1$, the kernel estimate
\begin{equation}\label{disper}
\Big|\int_0^\infty e^{it\sqrt{1+\lambda^2}} \phi_0(\lambda) \big(Q_j(\lambda)
dE_{\sqrt{\mathrm{H}}}(\lambda)Q_j^*(\lambda)\big)(z,z')
d\lambda\Big|\leq C (1+|t|)^{-\frac{n}2}
\end{equation}
holds for a constant $C$ independent of points $z,z'\in X$.
\end{proposition}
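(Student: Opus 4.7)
The plan is to substitute the spectral measure representation from Proposition~\ref{prop:localized spectral measure} and reduce the kernel estimate to three oscillatory integrals in $\lambda$. Writing $d=d(z,z')$ and $\Phi_\pm(\lambda)=t\sqrt{1+\lambda^2}\pm\lambda d$, the left-hand side splits as $K_+ + K_- + K_b$, where
\begin{align*}
K_\pm &= \int_0^\infty e^{i\Phi_\pm(\lambda)}\,\phi_0(\lambda)\,\lambda^{n-1}\,a_\pm(\lambda,z,z')\,d\lambda,\\
K_b &= \int_0^\infty e^{it\sqrt{1+\lambda^2}}\,\phi_0(\lambda)\,\lambda^{n-1}\,b(\lambda,z,z')\,d\lambda,
\end{align*}
and we have the amplitude bounds \eqref{bean} and \eqref{beans} at our disposal. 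For $|t|\le 1$ the trivial estimate $|a_\pm|,|b|\le C$ together with the compact support of $\phi_0$ gives the required uniform bound, so I assume $|t|\ge 1$ and distinguish two regimes based on the relative size of $d$ and $|t|$.

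In the \emph{far-field} regime $d\ge 2|t|$, the inequality $\lambda/\sqrt{1+\lambda^2}<1$ forces $|\Phi_\pm'(\lambda)|\ge d-|t|\ge d/2$ on $\mathrm{supp}\,\phi_0$, while $|\Phi_\pm''|\le|t|\le d/2$. A single integration by parts using the operator $L^*f=-\partial_\lambda(f/(i\Phi_\pm'))$ suffices: boundary contributions vanish (the factor $\lambda^{n-1}$ at $\lambda=0$, the cutoff $\phi_0$ at infinity), and combining \eqref{bean} with the direct computation $\int_0^2 \lambda^{n-2}(1+\lambda d)^{-(n-1)/2}\,d\lambda\lesssim d^{-(n-1)/2}$ gives $|K_\pm|\lesssim d^{-(n+1)/2}\lesssim|t|^{-n/2}$. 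For $K_b$, choosing $K\ge n/2$ in \eqref{beans} yields $|K_b|\lesssim d^{-K}\lesssim |t|^{-n/2}$ without any oscillation.

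In the \emph{near-field} regime $d\le 2|t|$, the key observation is that at low frequency the Klein-Gordon dispersion is Schr\"odinger-like: $\sqrt{1+\lambda^2}=1+\lambda^2/2+O(\lambda^4)$ on $\mathrm{supp}\,\phi_0$. For $K_b$, the phase $t\sqrt{1+\lambda^2}$ has a nondegenerate critical point at $\lambda=0$ with second derivative $t$, and the amplitude vanishes to order $n-1$ there; the Morse-lemma stationary-phase expansion (or, concretely, the rescaling $\lambda=\mu/\sqrt{|t|}$) produces the desired $|t|^{-n/2}$ factor. For $K_\pm$, the phase admits a critical point $\lambda_\ast$ solving $\lambda_\ast/\sqrt{1+\lambda_\ast^2}=\mp d/t$, which lies in $\mathrm{supp}\,\phi_0$ exactly when $|d/t|$ is bounded; stationary phase contributes $|\Phi_\pm''(\lambda_\ast)|^{-1/2}\sim|t|^{-1/2}$, and the amplitude at $\lambda_\ast$ is controlled by $\lambda_\ast^{n-1}(1+\lambda_\ast d)^{-(n-1)/2}$. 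Treating the sub-cases $\lambda_\ast d\lesssim 1$ and $\lambda_\ast d\gtrsim 1$ separately (the weight cancels the $\lambda_\ast^{n-1}$ factor in the latter) shows the combined factor is $\lesssim |t|^{-n/2}$; if the critical point leaves $\mathrm{supp}\,\phi_0$ one reverts to non-stationary integration by parts as in the far-field argument.

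The main obstacle is the stationary-phase analysis of $K_\pm$ in the transition range $d\sim|t|$, where the amplitude weight $(1+\lambda d)^{-(n-1)/2}$ is only polynomial and the interaction of the $\lambda^{n-1}$ factor with this weight near the critical point must be carefully tracked. A clean way to organize the computation is to decompose $\phi_0(\lambda)=\sum_{k\le 0}\phi(2^{-k}\lambda)$ into dyadic scales, run stationary phase at each scale $2^k$ with constants uniform in $k$, and sum geometrically; this both isolates the genuinely low-frequency regime $2^k\ll 1$, where the Schr\"odinger approximation $\sqrt{1+\lambda^2}\approx 1+\lambda^2/2$ is sharpest, and handles the intermediate scale $2^k\sim 1$ uniformly.
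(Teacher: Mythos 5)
Your far-field/near-field split is a legitimate alternative organization to the paper's argument, and the far-field case ($d\ge 2|t|$) is handled correctly: one integration by parts combined with the weight $(1+\lambda d)^{-(n-1)/2}$ does give $d^{-(n+1)/2}\lesssim |t|^{-n/2}$, and the $K_b$ bound $\lesssim d^{-n}$ is likewise fine. The conceptual core of the near-field argument — the phase $t\sqrt{1+\lambda^2}\pm\lambda d$ has second derivative $\sim t$ and behaves like the Schr\"odinger phase at low frequency — is also the paper's core idea.

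However, the near-field argument as written has a genuine gap in the regime $d\lesssim\sqrt{|t|}$. There the critical point $\lambda_*\sim d/t\lesssim |t|^{-1/2}$ is very close to the origin, and the amplitude $\lambda^{n-1}a_-(\lambda)$ oscillates on the scale $\lambda_*$ itself: from \eqref{bean}, $|\partial_\lambda^k(\lambda^{n-1}a_-)|\lesssim\lambda^{n-1-k}(1+\lambda d)^{-(n-1)/2}$. The stationary-phase remainder is then controlled by $(\lambda_*^2\,\Phi_-''(\lambda_*))^{-1}\sim(\lambda_*^2 t)^{-1}$, which is $\gtrsim 1$ precisely when $\lambda_*\lesssim t^{-1/2}$, so the asymptotic expansion is not uniform and the naive leading-term bound $|\Phi_-''(\lambda_*)|^{-1/2}\lambda_*^{n-1}$ cannot simply be quoted. (For $d\gtrsim\sqrt{|t|}$, so $\lambda_*\gtrsim t^{-1/2}$, your two sub-cases work as you say.) You sense this — the final paragraph proposes the dyadic split of $\phi_0$ — but that fix needs to actually replace the direct stationary-phase estimate in the main body, not sit alongside it as a remark. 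Concretely: discard $\lambda\lesssim |t|^{-1/2}$ by the trivial bound $\int_0^{|t|^{-1/2}}\lambda^{n-1}\,d\lambda\sim |t|^{-n/2}$, and integrate by parts for $\lambda\gtrsim |t|^{-1/2}$ using $|\Phi_-'(\lambda)|\gtrsim t\lambda$ once $\lambda\gg d/t$.

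The paper sidesteps this delicacy from the start by rescaling $\lambda\mapsto t^{-1/2}\lambda$ in the $a_\pm$ integrals, which simultaneously extracts the overall $t^{-n/2}$ and relocates the critical point of the rescaled phase $\sqrt{t^2+t\lambda^2}-\bar r\lambda$ (with $\bar r=d/\sqrt{t}$) to the unit scale $\lambda\sim\bar r$. After that, only size estimates and repeated integration by parts are needed, with a further decomposition in the size of $\bar r\lambda$ and of $\partial_\lambda\Phi$; no stationary-phase asymptotics are invoked. So the two routes are: (a) yours, direct stationary phase, which is shorter to state but needs the $\delta=|t|^{-1/2}$ split (equivalently, the dyadic decomposition) to be made uniform; (b) the paper's, rescale-first, which trades a longer case analysis for never leaving the integration-by-parts toolbox. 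Either works, but as submitted your near-field case is incomplete for $d\lesssim\sqrt{|t|}$.
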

\begin{proof} The key things in the proof are to use the property of spectral measure in Proposition
\ref{prop:localized spectral measure} and stationary phase argument. When $|t|\lesssim 1$, it is easy to show it due to the compact support of $\phi_0$. From now on, we only need to consider 
the case $t\gg1$ by symmetry.  Let $r=d(z,z')$
and $\bar{r}=rt^{-\frac12}$. In this case, we write the kernel using
Proposition \ref{prop:localized spectral measure}
\begin{equation}\label{4.4?}
\begin{split}
&\int_0^\infty e^{it\sqrt{1+\lambda^2}} \phi_0(\lambda)\big(Q_j(\lambda)
dE_{\sqrt{\mathrm{H}}}(\lambda)Q_j^*(\lambda)\big)(z,z')
d\lambda\\
&=\sum_\pm \int_0^\infty e^{it\sqrt{1+\lambda^2}}e^{\pm
ir\lambda}\lambda^{n-1}\phi_0(\lambda) a_\pm(\lambda,z,z')d\lambda+\int_0^\infty
e^{it\sqrt{1+\lambda^2}}\lambda^{n-1}\phi_0(\lambda)b(\lambda,z,z')d\lambda  \\&= t^{-\frac
n2}\sum_{\pm} \int_0^{\infty} e^{i\sqrt{t^2+t\lambda^2}}e^{\pm
i\bar{r}\lambda}\lambda^{n-1}\phi_0(t^{-1/2}\lambda)a_\pm(t^{-1/2}\lambda,z,z')d\lambda
\\&\quad+\int_0^\infty e^{it\sqrt{1+\lambda^2}}\lambda^{n-1}\phi_0(\lambda)b(\lambda,z,z')d\lambda,
\end{split}
\end{equation}
where $a_\pm$ satisfies estimates
\begin{equation*}
\big|\partial_\lambda^\alpha a_\pm(\lambda,z,z') \big|\leq C_\alpha
\lambda^{-\alpha}(1+\lambda d(z,z'))^{-\frac{n-1}2},
\end{equation*}
and therefore
\begin{equation}\label{beans0}
\Big|\partial_\lambda^\alpha \big(a_\pm(t^{-1/2}\lambda,z,z')\big)
\Big|\leq C_\alpha \lambda^{-\alpha}(1+\lambda
\bar{r})^{-\frac{n-1}2}.
\end{equation}

First, we show the contribution of the above term with $b(\lambda,z,z')$.
We can use the estimate \eqref{beans} to obtain
\begin{equation}\label{4.2}
\begin{split}
\Big|\big(\frac{d}{d\lambda}\big)^{N}b(\lambda,z,z')\Big|\leq
C_N\lambda^{n-1-N}\quad \forall N\in\mathbb{N}.
\end{split}
\end{equation}
Let $\delta$ be a small constant to be chosen later. Recall that we chose $\phi\in
C_c^\infty([\frac12,2])$ such that $\sum_{m \in \Z}\phi(2^{-m}\lambda)=1$; we
denote $\phi_0(\lambda)=\sum_{m\leq -1}\phi(2^{-m}\lambda)$. Then
\begin{equation*}
\Big|\int_0^\infty e^{it\sqrt{1+\lambda^2}} b(\lambda,z,z')
\phi_0(\lambda)\phi_0(\frac{\lambda}{\delta})d\lambda\Big|\leq
C\int_0^\delta\lambda^{n-1}d\lambda\leq C\delta^n.
\end{equation*}
 We use  integration by parts $N$ times to obtain, using  \eqref{4.2}
\begin{equation*}
\begin{split}
&\Big|\int_0^\infty e^{it\sqrt{1+\lambda^2}}
\phi_0(\lambda)\sum_{m\geq0}\phi\big(\frac{\lambda}{2^m\delta}\big)b(\lambda,z,z')
d\lambda\Big|\\
&\leq \sum_{m\geq 0}\Big|\int_0^\infty
\big(\frac{\sqrt{1+\lambda^2}}{i\lambda
t}\frac\partial{\partial\lambda}\big)^{N}\big(e^{it\sqrt{1+\lambda^2}}\big)
\phi_0(\lambda)
\phi\big(\frac{\lambda}{2^m\delta}\big)b(\lambda,z,z')
d\lambda\Big|\\& \leq
C_N|t|^{-N}\sum_{m\geq0}\int_{2^{m-1}\delta}^{2^{m+1}\delta}\lambda^{n-1-2N}d\lambda\leq
C_N|t|^{-N}\delta^{n-2N}.
\end{split}
\end{equation*}
Choosing $\delta=|t|^{-\frac12}$, we have thus proved
\begin{equation}\label{4.3}
\begin{split}
&\Big|\int_0^\infty e^{it\sqrt{1+\lambda^2}} \phi_0(\lambda) b(\lambda,z,z')
d\lambda\Big|\leq C_N|t|^{-\frac n2}.
\end{split}
\end{equation}

 Now we consider first term in RHS
of \eqref{4.4?}. We divide it into two pieces using the partition of
unity above. It suffices to prove that there exists a constant $C$
independent of $\bar{r}$ such that
\begin{equation*}
\begin{split}
I^\pm:=&\Big|\int_0^{\infty} e^{i\sqrt{t^2+t\lambda^2}}e^{\pm
i\bar{r}\lambda}\lambda^{n-1}\phi_0(t^{-1/2}\lambda)a_\pm(t^{-1/2}\lambda,z,z')\phi_0(\lambda)d\lambda\Big|\leq
C,\\II^\pm:=& \Big|\sum_{m\geq0}\int_0^{\infty} e^{i\sqrt{t^2+t\lambda^2}}e^{\pm
i\bar{r}\lambda}\lambda^{n-1} \phi_0(t^{-1/2}\lambda) a_\pm(t^{-1/2}\lambda,z,z')\phi(\frac{\lambda}{2^m})d\lambda\Big|\leq
C.
\end{split}
\end{equation*}
The estimate for $I^\pm$ is obvious, since $\lambda\leq 1$.
For $II^{+}$, we use integration by parts. Notice that
$$
L^+ (e^{i\sqrt{t^2+t\lambda^2} + i \bar{r} \lambda}) =  e^{i\sqrt{t^2+t\lambda^2} + i \bar{r} \lambda}, \quad L^+ = \frac{-i}{\frac{t\lambda}{\sqrt{t^2+t\lambda^2}} + \bar{r}} \frac{\partial}{\partial \lambda}.
$$
Note that if $0<\lambda<\sqrt{t}$, we have for $k\geq0$ by induction
\begin{equation}\label{I-bp4}
\begin{split}
\partial^k_\lambda\left[\left(\frac{t\lambda}{\sqrt{t^2+t\lambda^2}}+\bar{r}\right)^{-1}\right]\leq C_k \lambda^{-1-k}.
\end{split}
\end{equation}
Writing
$$
e^{i\sqrt{t^2+t\lambda^2} + i \bar{r} \lambda}= (L^+)^N (e^{i\sqrt{t^2+t\lambda^2} + i \bar{r} \lambda})
$$
and integrating by parts, we gain a factor of $\lambda^{-2N}$ thanks to \eqref{beans0} and \eqref{I-bp4}.
Thus $II^{+}$ can be estimated by
$$
\sum_{m \geq 0} \int_{\lambda \sim 2^m} \lambda^{n-1-2N} \, d\lambda \leq C.
$$

To treat $II^-$, we introduce a further decomposition, based on the size of $\bar{r} \lambda$. We write
$II^-=II^-_1+II^-_2$, where (dropping the $-$ superscripts and subscripts from here on)
\begin{equation*}
\begin{split}
II_1=&\Big|\sum_{m\geq0}\int_0^{\infty} e^{i\sqrt{t^2+t\lambda^2}}e^{-
i\bar{r}\lambda}\lambda^{n-1}\phi_0(t^{-1/2}\lambda) a(t^{-1/2}\lambda,z,z')\phi(\frac{\lambda}{2^m})
\phi_0(8\bar{r} \lambda) d\lambda\Big|, ~\\II_2=&\Big|\int_0^{\infty}
e^{i\sqrt{t^2+t\lambda^2}}e^{-
i\bar{r}\lambda}\lambda^{n-1}\phi_0(t^{-1/2}\lambda)a(t^{-1/2}\lambda,z,z')
\left(1-\phi_0(\lambda)\right) \big( 1 - \phi_0(8\bar{r} \lambda)
\big) d\lambda\Big|.
\end{split}
\end{equation*}
Let $\Phi(\lambda,\bar{r})=\sqrt{t^2+t\lambda^2}-\bar{r}\lambda$. We first
consider $II_1$. Since the integral for $II_1$ is supported where $\lambda \leq (8 \bar{r})^{-1}$ and $\lambda \geq
1/2$, the integrand is only
nonzero when $\bar{r}\leq1/4$. Since $\lambda<\sqrt{t}$, therefore $|\partial_\lambda\Phi| =
\frac{t\lambda}{\sqrt{t^2+t\lambda^2}} - \bar{r} \geq \frac{\sqrt{2}}2\lambda- \bar{r} \geq\frac1{10}\lambda$. Define the operator
$L=L(\lambda,\bar{r})=(\frac{t\lambda}{\sqrt{t^2+t\lambda^2}}-\bar{r} )^{-1}\partial_\lambda$.  On the support of $\phi_0(\lambda/\sqrt{t})$, we have for $k\geq0$
\begin{equation}\label{I-bp4'}
\begin{split}
\partial^k_\lambda\left[\left(\frac{t\lambda}{\sqrt{t^2+t\lambda^2}}-\bar{r}\right)^{-1}\right]\leq C_k \lambda^{-1-k}.
\end{split}
\end{equation}
By
\eqref{beans0} and using integration by parts, we obtain for $N>n/2$
\begin{equation*}
\begin{split}
II_1\leq&\sum_{m\geq0}\Big|\int_0^{\infty}
e^{i\sqrt{t^2+t\lambda^2}}e^{-
i\bar{r}\lambda}\lambda^{n-1}\phi_0(t^{-1/2}\lambda)a(t^{-1/2}\lambda,z,z')\phi(\frac{\lambda}{2^m})\phi_0(8\bar{r} \lambda) d\lambda\Big|
\\=&\sum_{m\geq0}\Big|\int_0^{\infty} L^{N}
\big(e^{i(\frac{\lambda}{\sqrt{t^2+t\lambda^2}}-
\bar{r}\lambda)}\big)\Big[\lambda^{n-1}\phi_0(t^{-1/2}\lambda)a(t^{-1/2}\lambda,z,z')\phi(\frac{\lambda}{2^m})\phi_0(8\bar{r}
\lambda) \Big]d\lambda\Big|
\\\leq &C_N\sum_{m\geq0}\int_{|\lambda|\sim
2^{m}}\lambda^{n-1-2N}d\lambda\leq C_N.
\end{split}
\end{equation*}
Finally we consider $II_2$. Here, we replace the decomposition $\sum_m \phi(2^{-m} \lambda)$ with a different decomposition,
based on the size of $\partial_\lambda \Phi$.
\begin{equation*}
\begin{split}
II_2\leq &\Big|\int_0^{\infty} e^{i\sqrt{t^2+t\lambda^2}}e^{-
i\bar{r}\lambda}\lambda^{n-1}\phi_0(t^{-1/2}\lambda) a(t^{-1/2}\lambda,z,z')\\ &\qquad \qquad \big(1-\phi_0(\lambda)\big)\phi_0(\frac{t\lambda}{\sqrt{t^2+t\lambda^2}}-\bar{r}) \big( 1 - \phi_0(8\bar{r} \lambda) \big) \, d\lambda\Big|\\
&+\sum_{m\geq0}\Big|\int_0^{\infty}
e^{i\sqrt{t^2+t\lambda^2}}e^{-
i\bar{r}\lambda}\lambda^{n-1}\phi_0(t^{-1/2}\lambda) a(t^{-1/2}\lambda,z,z')
\\&\qquad \qquad \big(1-\phi_0(\lambda)\big)\phi\big(\frac{\frac{t\lambda}{\sqrt{t^2+t\lambda^2}}-\bar{r}}{2^m}\big)\big( 1 - \phi_0(8\bar{r} \lambda) \big) \, d\lambda\Big|\\:=&II_2^1+II_2^2.
\end{split}
\end{equation*}
If $\bar{r} \leq 10$, note $\lambda<\sqrt{t}$ again, then for the integrand of $II_2^1$ to be
nonzero we must have $\lambda \leq 100$, due to the second $\phi_0$ factor in $II_2^1$.
Then it is easy to see that $II_2^1$ is uniformly bounded. If $\bar{r} \geq
10$, by $|\frac{t\lambda}{\sqrt{t^2+t\lambda^2}}-\bar{r}|\leq 1$ and $\lambda<\sqrt{t}$, we have $\bar{r}\sim\lambda$.
Hence, using \eqref{beans0} with $\alpha = 0$,
\begin{equation*}
\begin{split}
II_2^1&\le\int_{\{\lambda<\sqrt{t}:|\frac{t\lambda}{\sqrt{t^2+t\lambda^2}}-\bar{r}|\leq 1\}}\lambda^{n-1}(1+\bar{r}\lambda)^{-\frac{n-1}2}d\lambda
\\&\leq C t^{1/2} \int_{\{\lambda<1:|\frac{\lambda}{\sqrt{1+\lambda^2}}-\frac{\bar{r}}{\sqrt{t}}|\leq 1/\sqrt{t}\}}d\lambda
\\&\leq C t^{1/2} \int_{\{\lambda<1:|\bar{\lambda}-\frac{\bar{r}}{\sqrt{t}}|\leq 1/\sqrt{t}\}}(1+\lambda^2)^{3/2}d\bar{\lambda}\leq C.
\end{split}
\end{equation*}

Now we consider the second term. We write \begin{equation*}
\begin{split}
II_2^2\leq&\sum_{m\geq0}\Big|\int_0^{\sqrt{t}}
e^{i\sqrt{t^2+t\lambda^2}}e^{-
i\bar{r}\lambda}\phi_0(t^{-1/2}\lambda)\lambda^{n-1}a(t^{-1/2}\lambda,z,z')
\\&\qquad\qquad\big(1-\phi_0(\lambda)\big)\phi(\frac{\frac{t\lambda}{\sqrt{t^2+t\lambda^2}}-\bar{r}}{2^m})\big( 1 - \phi_0(8\bar{r} \lambda) \big) \, d\lambda\Big|
\\=&\sum_{m\geq0}\Big|\int L^{N}
\big(e^{i(\sqrt{t^2+t\lambda^2}-
\bar{r}\lambda)}\big)\Big[\phi_0(t^{-1/2}\lambda)\lambda^{n-1}a(t^{-1/2}\lambda,z,z')\\&\qquad \qquad\big(1-\phi_0(\lambda)\big)\phi(\frac{\frac{t\lambda}{\sqrt{t^2+t\lambda^2}}-\bar{r}}{2^m})\big( 1 - \phi_0(8\bar{r} \lambda) \big) \Big] \, d\lambda\Big|.
\end{split}
\end{equation*}
Let $$b(\lambda)=\lambda^{n-1}a(t^{-1/2}\lambda,z,z')\big(1-\phi_0(\lambda)\big)\phi(\frac{\frac{t\lambda}{\sqrt{t^2+t\lambda^2}}-\bar{r}}{2^m})\big( 1 - \phi_0(8\bar{r} \lambda) \big),$$
then we have the rough estimate, due to the support of $b$
$$|\partial_\lambda ^\alpha b|\leq C_{\alpha}\lambda^{n-1} (1+\bar{r}\lambda)^{-(n-1)/2}.$$
Hence we obtain
\begin{equation*}
\begin{split}
|(L^*)^N [b(\lambda)]|&\leq C_N 2^{-mN}\lambda^{n-1} (1+\bar{r}\lambda)^{-(n-1)/2}.
\end{split}
\end{equation*}
Therefore we obtain by using integrating by parts and
\eqref{beans0}
\begin{equation*}
\begin{split}
II_2^2 \leq C_N\sum_{m\geq0}2^{-mN}\int_{\{\lambda<\sqrt{t},|\frac{t\lambda}{\sqrt{t^2+t\lambda^2}}-\bar{r}|\sim 2^m\}}\lambda^{n-1}(1+\bar{r}\lambda)^{-\frac{n-1}2}d\lambda.
\end{split}
\end{equation*}
If $\bar{r}\leq 2^{m+1}$, then $\lambda \leq 2^{m+2}$ on the support
of the integrand. 
\begin{equation*}
\begin{split}
II_2^2\leq C_N\sum_{m\geq0}2^{-mN}2^{(m+2)n}\leq C.
\end{split}
\end{equation*}
If $\bar{r}\geq 2^{m+1}$, we
have $\lambda\sim \bar{r}$, thus
\begin{equation*}
\begin{split}
II_2^2\leq C_N t^{1/2}  \sum_{m\geq0}2^{-mN} \int_{\{\lambda<1:|\frac{\lambda}{\sqrt{1+\lambda^2}}-\frac{\bar{r}}{\sqrt{t}}|\sim \frac{2^m}{\sqrt{t}}\}}d\lambda \leq C_N\sum_{m\geq0}2^{-mN}2^{m},
\end{split}
\end{equation*}
which is summable
for $N > 1$.  Therefore we have completed the proof of Proposition
\ref{dispersive-l}.\end{proof}\vspace{0.2cm}

\begin{proposition}[Microlocalized dispersive estimates for high frequency]\label{dispersive-h}
Let $Q_j(\lambda)$  be in Proposition \ref{prop:localized spectral measure}. Then for all integers
$j\geq1$ and $k\geq0$, the kernel estimate 
\begin{equation}\label{disper}
\begin{split}
\Big|\int_0^\infty e^{it\sqrt{1+\lambda^2}} \phi(2^{-k}\lambda) \big(Q_j(\lambda)
&dE_{\sqrt{\mathrm{H}}}(\lambda)Q_j^*(\lambda)\big)(z,z')
d\lambda\Big|\\ &\leq C 2^{k(n+1+\theta)/2}\left(2^{-k}+|t|\right)^{-(n-1+\theta)/2}.
\end{split}
\end{equation}
holds for $0\leq \theta\leq 1$ and a constant $C$ independent of $k$ and points $z,z'\in X$.
\end{proposition}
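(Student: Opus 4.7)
The plan is to follow the template of the low-frequency Proposition \ref{dispersive-l}, substituting the spectral-measure expansion from Proposition \ref{prop:localized spectral measure} and writing $r = d(z,z')$, so that the kernel decomposes as
\begin{equation*}
K(t,z,z') = I_+(t,z,z') + I_-(t,z,z') + I_b(t,z,z'),
\end{equation*}
where $I_\pm$ carry the phase $\Phi_\pm(\lambda) = t\sqrt{1+\lambda^2} \pm r\lambda$ against the amplitude $\phi(2^{-k}\lambda)\lambda^{n-1}a_\pm$, and $I_b$ carries only $e^{it\sqrt{1+\lambda^2}}$ against $\phi(2^{-k}\lambda)\lambda^{n-1}b$. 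In the range $|t|\lesssim 2^{-k}$ the target bound is $\lesssim 2^{k(n+\theta)}$, immediate from the trivial size estimate $\int_{\lambda\sim 2^k}\lambda^{n-1}\,d\lambda \lesssim 2^{kn}$ for all $\theta\in[0,1]$, so I assume $|t|\gg 2^{-k}$ below.

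Next I would dispose of $I_b$ and $I_+$. For $I_b$ I use the rapid decay $|\partial_\lambda^\alpha b| \leq C_{\alpha,N}\lambda^{-\alpha}(1+\lambda r)^{-N}$; since $\lambda/\sqrt{1+\lambda^2} \geq 1/\sqrt{2}$ whenever $\lambda \geq 1/2$ (which holds on the support because $k\geq 0$), iterated integration by parts with $(it\lambda/\sqrt{1+\lambda^2})^{-1}\partial_\lambda$ produces any $|t|^{-N'}$ decay, easily beating the target. For $I_+$, the derivative $\Phi_+'(\lambda) = t\lambda/\sqrt{1+\lambda^2} + r \gtrsim t+r$ has no stationary point, and $N$-fold integration by parts gains $(t+r)^{-N}$; combined with $|a_+| \lesssim (1+\lambda r)^{-(n-1)/2}$, this yields a bound stronger than $C\,2^{k(n+1+\theta)/2}|t|^{-(n-1+\theta)/2}$.

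The main term is $I_-$, whose phase has a critical point $\lambda_c = r/\sqrt{t^2-r^2}$ when $t>r$, and $\Phi_-''(\lambda) = t(1+\lambda^2)^{-3/2}$. When $\lambda_c \not\sim 2^k$ the integral is non-stationary in the support, and by the same kind of case-splitting as in the low-frequency proof (subdividing based on the size of $\Phi_-'(\lambda)$ relative to $\lambda,r,t$) one obtains the target by integration by parts, using the Klein-Gordon analogue of the estimate $|\partial_\lambda^k(t\lambda/\sqrt{1+\lambda^2}-r)^{-1}| \lesssim \lambda^{-1-k}$ on each sub-support. When $\lambda_c \sim 2^k$, necessarily $r \sim t$, and I would establish two bounds that are then interpolated. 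The \emph{wave-type} bound $|I_-| \lesssim C\,2^{k(n+1)/2}|t|^{-(n-1)/2}$ comes from discarding oscillations and using only the amplitude decay, $\int_{\lambda\sim 2^k}\lambda^{n-1}(1+\lambda r)^{-(n-1)/2}\,d\lambda \lesssim 2^{k(n+1)/2}r^{-(n-1)/2}$, together with $r\sim t$. The \emph{Schr\"odinger-type} bound $|I_-| \lesssim C\,2^{k(n+2)/2}|t|^{-n/2}$ comes from standard one-dimensional stationary phase: the main contribution is $\lambda_c^{n-1}(1+\lambda_c r)^{-(n-1)/2}|\Phi_-''(\lambda_c)|^{-1/2}$, and substituting $\lambda_c \sim 2^k$, $r \sim t$, and $|\Phi_-''(\lambda_c)| \sim |t|\cdot 2^{-3k}$ gives the stated size. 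The pointwise inequality $|I_-| \leq |I_-|^{1-\theta}|I_-|^\theta$, applying the wave-type bound to the first factor and the Schr\"odinger-type bound to the second, produces the claimed $C\,2^{k(n+1+\theta)/2}|t|^{-(n-1+\theta)/2}$ for every $\theta\in[0,1]$.

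The hardest part I anticipate is the non-stationary sub-case of $I_-$: one has to control the integration-by-parts estimates uniformly across several regimes of $r$ relative to $t$ and $2^k$ (paralleling the decomposition $II_1, II_2^1, II_2^2$ of the low-frequency proof), while the Klein-Gordon phase---unlike the pure wave phase $(t\pm r)\lambda$---is nonlinear in $\lambda$, so the localizations $\phi_0(t\lambda/\sqrt{1+\lambda^2}-r)$ and $\phi((t\lambda/\sqrt{1+\lambda^2}-r)/2^m)$ must be handled with care. Once this bookkeeping matches the low-frequency proof, the combination of stationary phase and $\theta$-interpolation becomes routine.
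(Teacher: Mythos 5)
Your proposal follows essentially the same route as the paper: the $a_\pm$/$b$ decomposition, dispatching $I_b$ and $I_+$ by non-stationary phase, identifying $r\sim t$ as the main regime for $I_-$, pairing the amplitude-decay (wave-type) bound with a second-derivative (Schr\"odinger-type) bound, and interpolating in $\theta$ to absorb the gap between $(n-1)/2$ and $n/2$ decay are all exactly the paper's steps. Two small differences in execution, both of which make the paper's argument lighter than you anticipate: the paper first rescales $\lambda\mapsto 2^k\lambda$ so that the spectral variable lies in a fixed interval $[1/2,2]$ and the large parameter becomes $t/h$ with $h=2^{-k}$; after this, the non-stationary regime of $I_-$ needs only a single crude lower bound $|\partial_\lambda\Phi_\pm|\geq 1/4$ (for $r<t/4$ or $r>2t$) plus iterated integration by parts, with \emph{no} analogue of the low-frequency splitting $II_1,II_2^1,II_2^2$ that you flag as the hardest step. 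And rather than invoking a stationary-phase expansion with the curvature $|\Phi_-''(\lambda_c)|$, the paper applies the Van der Corput lemma directly with the uniform lower bound $|\partial_\lambda^2\Phi_-|\gtrsim h^2$ on the whole support, combined once with the rough amplitude bound, which packages the wave-type and Schr\"odinger-type estimates into the single factor $(t/h)^{-(n-1)/2}(1+th)^{-1/2}$ from which the $\theta$-family follows; your pointwise interpolation $|I_-|\leq|I_-|^{1-\theta}|I_-|^{\theta}$ is equivalent.
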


\begin{proof} 
Let $h=2^{-k}\leq 1$. The key to the proof is to use the estimates in Proposition
\ref{prop:localized spectral measure}.  If $|t|\leq h$, it is easy to see \eqref{disper} due to 
\begin{equation*}
\begin{split}
\Big|Q_j(\lambda)
dE_{\sqrt{\mathrm{H}}}(\lambda)Q_j^*(\lambda)
\Big|\leq C \lambda^{n-1}.
\end{split}
\end{equation*}
From now on, we only consider $|t|\geq h=2^{-k}$. By the scaling, this is a directly consequence of
\begin{equation}\label{disper'}
\begin{split}
\Big|\int_0^\infty e^{it\sqrt{h^2+\lambda^2}/h} \phi(\lambda) &\big(Q_j
dE_{\sqrt{\mathrm{H}}}Q_j^*\big)(\lambda/h, z,z')
d\lambda\Big|\\&\leq C h^{-(n-1)}(|t|/h)^{-\frac{n-1}2}(1+h|t|)^{-1/2}.
\end{split}
\end{equation}
Indeed if we have done this, we have for $0\leq\theta\leq1$
\begin{equation*}
\begin{split}
&\Big|\int_0^\infty e^{it\sqrt{1+\lambda^2}} \phi(2^{-k}\lambda) \big(Q_j(\lambda)
dE_{\sqrt{\mathrm{H}}}(\lambda)Q_j^*(\lambda)\big)(z,z')
d\lambda\Big|\\ &\leq C 2^{k(n+1)/2}|t|^{-(n-1)/2}\left(1+2^{-k}|t|\right)^{-1/2}\\&\leq C 2^{k(n+1+\theta)/2}(2^{-k}+|t|)^{-(n-1+\theta)/2}(2^{-k}|t|)^{\frac{\theta}2}\left(1+2^{-k}|t|\right)^{-1/2}
\end{split}
\end{equation*}
which implies \eqref{disper}.

Now we prove \eqref{disper'}.  Let $r=d(z,z')$,  we write
\begin{equation}\label{4.4}
\begin{split}
&\Big|\int_0^\infty e^{it\sqrt{h^2+\lambda^2}/h} \phi(\lambda) \big(Q_j
dE_{\sqrt{\mathrm{H}}}Q_j^*\big)(\lambda/h, z,z')
d\lambda\\
&=\sum_\pm \int_0^\infty e^{it\sqrt{h^2+\lambda^2}/h}e^{\pm
ir\lambda/h}\phi(\lambda)(\lambda/h)^{n-1}a_\pm(\lambda/h,z,z')d\lambda\\&\qquad\qquad+\int_0^\infty
e^{it\sqrt{h^2+\lambda^2}/h}\phi(\lambda)(\lambda/h)^{n-1}b(\lambda/h,z,z')d\lambda
\end{split}
\end{equation}
where $a_\pm$ satisfies estimates
\begin{equation*}
\big|\partial_\lambda^\alpha a_\pm(\lambda,z,z') \big|\leq C_\alpha
\lambda^{-\alpha}(1+\lambda d(z,z'))^{-\frac{n-1}2},
\end{equation*}
and therefore
\begin{equation}\label{beans0}
\Big|\partial_\lambda^\alpha \big(a_\pm(h^{-1}\lambda,z,z')\big)
\Big|\leq C_\alpha \lambda^{-\alpha}(1+h^{-1}\lambda
r)^{-\frac{n-1}2}.
\end{equation}

Consider the terms  with
the `$b$' term, then we can use the estimate \eqref{beans} to obtain
\begin{equation}
\begin{split}
\Big|\big(\frac{d}{d\lambda}\big)^{N}\big(\phi(\lambda)(\lambda/h)^{n-1}b(\lambda/h,z,z')\big)\Big|\leq
C_N(\lambda/h)^{n-1}\lambda^{-N},\quad \forall N\in\mathbb{N}.
\end{split}
\end{equation}
Let $\delta$ be a small constant to be chosen later. Recall that we chose $\phi\in
C_c^\infty([\frac12,2])$ such that $\sum_{m \in \Z}\phi(2^{-m}\lambda)=1$; we
denote $\phi_0(\lambda)=\sum_{m\leq -1}\phi(2^{-m}\lambda)$. Then
\begin{equation*}
\begin{split}
\Big|\int_0^\infty e^{it\sqrt{h^2+\lambda^2}/h} \phi(\lambda)(\lambda/h)^{n-1}&b(\lambda/h,z,z')
\phi_0(\frac{\lambda}{\delta})d\lambda\Big|\\&\leq
C\int_0^{\delta}(\lambda/h)^{n-1}d\lambda\leq C h(\delta/h)^n.
\end{split}
\end{equation*}
 We use  integration by parts $N$ times to obtain, using  \eqref{4.2},
\begin{equation*}
\begin{split}
&\Big|\int_0^\infty e^{it\sqrt{h^2+\lambda^2}/h}
\sum_{m\geq0}\phi(\frac{\lambda}{2^{m}\delta})\phi(\lambda)(\lambda/h)^{n-1}b(\lambda/h,z,z')
d\lambda\Big|\\
&\leq \sum_{m\geq 0}\Big|\int_0^\infty
\big(\frac{h\sqrt{h^2+\lambda^2}}{\lambda
t}\frac\partial{\partial\lambda}\big)^{N}\big(e^{it\sqrt{h^2+\lambda^2}/h}\big)
\phi(\frac{\lambda}{2^m\delta})\phi(\lambda)(\lambda/h)^{n-1}b(\lambda/h,z,z')
d\lambda\Big|\\& \leq
C_N(|t|/h)^{-N} h^{-(n-1)}\sum_{m\geq0}\int_{2^{m-1}\delta}^{2^{m+1}\delta}\lambda^{n-1-2N} d\lambda\leq
C_N(|t|/h)^{-N}h^{-(n-1)}\delta^{n-2N}.
\end{split}
\end{equation*}

Choosing $\delta=(|t|/h)^{-\frac12}$ and noting $|t|\geq h$, we have thus proved 
\begin{equation}
\begin{split}
&\Big|\int_0^\infty e^{it\sqrt{h^2+\lambda^2}/h} \phi(\lambda)(\lambda/h)^{n-1}b(\lambda/h,z,z')
d\lambda\Big|\\&\leq C h (h|t|)^{-\frac{n}2}\leq C(h|t|)^{-\frac{n-1}2}(h^{-1}|t|)^{-1/2}\leq C (|t|h)^{-\frac{n-1}2}(1+h|t|)^{-1/2}.
\end{split}
\end{equation}

Next we consider the terms with $a_\pm$. Without loss of generality, we consider $t\gg h$. Let $\Phi_{\pm}(\lambda, h, r,t)=\sqrt{h^2+\lambda^2}\pm \frac{\lambda r}{t}$,  
it suffices to show there exists a constant $C$ independent of $r, t$ and $h$ such that
\begin{equation}\label{disper''}
\begin{split}
|I_h^\pm(t,r)|\leq C (|t|/h)^{-\frac{n-1}2}(1+h|t|)^{-1/2}
\end{split}
\end{equation}
where
\begin{equation}
\begin{split}
I_h^\pm(t,r):=\int_0^\infty e^{i\frac{t}{h}\Phi_\pm(\lambda, h, r, t)}\phi(\lambda)\lambda^{n-1}a_\pm(\lambda/h,z,z')d\lambda.
\end{split}
\end{equation}
If $r<t/4$ or $r>2t$, a simpler computation gives
$$|\partial_\lambda \Phi_\pm(\lambda, h, r, t)|=\left|\frac{\lambda}{\sqrt{h^2+\lambda^2}}\pm\frac r t\right|\geq 1/4. $$

It is not difficult to use the Leibniz rule to prove 
\begin{lemma} Let $L=(\frac {it}h\partial_\lambda\Phi)^{-1}\partial_\lambda$ and let $L^*$ be its adjoint operator. Suppose 
that $b(\lambda)$ satisfies $$|\partial_\lambda^\alpha b(\lambda)|\leq \lambda^{n-1-|\alpha|}.$$
Then we have for any $N\geq 0$
\begin{equation}\label{I-bp1}
|(L^*)^N [b(\lambda)]|\leq C \lambda^{n-1-N} \sum_{j=0}^N\frac{(t/h)^j}{\left|\frac {it}h\partial_\lambda\Phi\right|^{N+j}}.
\end{equation}
\end{lemma}
By integrating by parts and using this lemma, we obtain for $r<\frac t4$ or $r>2t$
\begin{equation}
\begin{split}
|I_h^\pm(t,r)|\leq C (|t|/h)^{-N},\quad \forall N\geq 0
\end{split}
\end{equation}
which implies \eqref{disper''} since $t\geq h$. Therefore we only need consider  the case $t\sim r$.
A rought estimate gives
\begin{equation}\label{rough}
\begin{split}
|I_h^\pm(t,r)|\leq \int_0^\infty \phi(\lambda)\lambda^{n-1}(1+\lambda r/h)^{-(n-1)/2} d\lambda \leq C (|t|/h)^{-\frac{n-1}2}.
\end{split}
\end{equation}
Note that 
$$|\partial_\lambda \Phi_+(\lambda, h, r, t)|=\left|\frac{\lambda}{\sqrt{h^2+\lambda^2}}+\frac r t\right|\geq 1/2, $$
by using the same stationary phase argument again, we obtain
\begin{equation}
\begin{split}
|I_h^+(t,r)|\leq C (|t|/h)^{-N},\quad \forall N\geq 0.
\end{split}
\end{equation}
To estimate $I_h^-(t,r)$, we need the following Van der Corput lemma, see \cite{Stein}
\begin{lemma}[Van der Corput] Let $\phi$ be real-valued and smooth in $(a,b)$, and that $|\phi^{(k)}(x)|\geq1$ for all $x\in (a,b)$. Then
\begin{equation}
\left|\int_a^b e^{i\lambda\phi(x)}\psi(x)dx\right|\leq c_k\lambda^{-1/k}\left(|\psi(b)|+\int_a^b|\psi'(x)|dx\right)
\end{equation}
holds when (i) $k\geq2$ or (ii)$k=1$ and $\phi'(x)$ is monotonic. Here $c_k$ is a constant depending only on $k$.
\end{lemma}

It is easy to check for $h\leq 1$ and $\lambda\sim 1$
$$|\partial^2_\lambda \Phi_-(\lambda, h, r, t)|=\left|\frac{h^2}{\sqrt{h^2+\lambda^2}}\right|\geq \frac{h^2}{100}. $$
By using the Van der Corput lemma with $\lambda=th$, we show
\begin{equation}
\begin{split}
|I_h^-(t,r)|&\leq C (|t|/h)^{-1/2}\int_0^\infty \left|\frac{d}{d\lambda}\left(\phi(\lambda)\lambda^{n-1}a_{-}(\lambda/h,z,z')\right)\right| d\lambda\\
&\leq C (|t|/h)^{-1/2}\int_0^2\lambda^{n-2} (1+\lambda r/h)^{-\frac{n-1}2} d\lambda\leq C(th)^{-1/2}(t/h)^{-\frac{n-1}2}.
\end{split}
\end{equation}
This together with \eqref{rough}, we prove \eqref{disper''}.
\end{proof}

As two consequences of Proposition \ref{dispersive-l} and Proposition \ref{dispersive-h} respectively, we
immediately have
\begin{proposition}\label{prop:Dispersive-l} Let $U^{\mathrm{low}}_{j}(t)$ be defined in \eqref{Ulh}.
Then there exists a constant $C$ independent of $t, z, z'$ for all 
$j\geq 1$, such that
\begin{equation}\label{Dispersive}
\|U^\mathrm{low}_{j}(t)(U_{j}^\mathrm{low})^*(s)\|_{L^1\rightarrow L^\infty}\leq C
(1+|t-s|)^{-n/2}.
\end{equation}
\end{proposition}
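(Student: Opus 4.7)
The plan is to reduce the estimate directly to the kernel bound established in Proposition \ref{dispersive-l}, observing that Proposition \ref{prop:Dispersive-l} is essentially the $TT^*$ reformulation of that result.

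First I would use the spectral theorem together with the definitions \eqref{Ulh} to write, at least formally,
\begin{equation*}
U^{\mathrm{low}}_j(t)\bigl(U^{\mathrm{low}}_j\bigr)^*(s) = \int_0^\infty e^{i(t-s)\sqrt{1+\lambda^2}}\,\phi_0(\lambda)^2\, Q_j(\lambda)\, dE_{\sqrt{\mathrm{H}}}(\lambda)\, Q_j(\lambda)^*.
\end{equation*}
To justify this rigorously I would repeat the integration-by-parts argument carried out in \eqref{Uijk}, using the uniform $L^2$-boundedness of $Q_j(\lambda)$ and of $\lambda\,\partial_\lambda Q_j(\lambda)$ provided by Lemma \ref{QQ'}; since $\phi_0$ is compactly supported in $[0,2]$, the spectral integral is genuinely over a finite dyadic range and the argument goes through verbatim.

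Second, the Schwartz kernel of the above operator at $(z,z')$ is precisely the integral appearing in Proposition \ref{dispersive-l}, with $t$ replaced by $t-s$ and the cutoff $\phi_0(\lambda)$ replaced by $\phi_0(\lambda)^2$. The latter has exactly the same structural properties (smooth, compactly supported in $[0,2]$, with uniform bounds on all derivatives), so the stationary-phase analysis of Proposition \ref{dispersive-l} applies without modification and yields
\begin{equation*}
\Bigl| K^{\mathrm{low}}_j(t-s;z,z') \Bigr| \leq C\,(1+|t-s|)^{-n/2}
\end{equation*}
uniformly in $z,z'\in X$ and in $j$.

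Finally, since the $L^1\to L^\infty$ operator norm of an integral operator is the $L^\infty$ norm of its Schwartz kernel, the bound \eqref{Dispersive} follows. The only potentially subtle point is the spectral identity in the first step, but this is handled exactly as in \eqref{Uijk}; no essentially new stationary-phase input is needed, and the main difficulty — establishing the pointwise decay of the oscillatory integral involving the Klein–Gordon phase $\sqrt{1+\lambda^2}$ at low frequency — has already been absorbed into Proposition \ref{dispersive-l}.
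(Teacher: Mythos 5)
Your proposal is correct and follows exactly the route the paper has in mind: the paper states Proposition \ref{prop:Dispersive-l} as an immediate consequence of the kernel bound in Proposition \ref{dispersive-l} without further comment, and your argument simply supplies the (routine) spectral-theorem/$TT^*$ reduction, together with the observation that replacing $\phi_0$ by $\phi_0^2$ does not affect the stationary-phase analysis. Your added care about the $\phi_0$ versus $\phi_0^2$ discrepancy and the justification of the spectral identity via the integration-by-parts as in \eqref{Uijk} is a welcome, if minor, sharpening of what the paper leaves implicit.
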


\begin{proposition}\label{prop:Dispersive-h} Let $U_{j,k}(t)$ be defined in \eqref{Uij}.
Then there exists a constant $C$ independent of $t, z, z'$ for all 
$j\geq 1, k\in\Z^+$ such that
\begin{equation}\label{Dispersive}
\|U_{j,k}(t)U^*_{j,k}(s)\|_{L^1\rightarrow L^\infty}\leq C
2^{k(n+1+\theta)/2}(2^{-k}+|t-s|)^{-(n-1+\theta)/2}
\end{equation}
where $0\leq \theta\leq 1$.
\end{proposition}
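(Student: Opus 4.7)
\medskip
\noindent\textbf{Proof plan for Proposition \ref{prop:Dispersive-h}.}
The plan is to deduce the $TT^*$ dispersive bound as an essentially immediate corollary of the pointwise kernel estimate already established in Proposition \ref{dispersive-h}. Recall that for any bounded operator $T$ from $L^1(X)$ to $L^\infty(X)$ with integral kernel $K_T(z,z')$, one has $\|T\|_{L^1\to L^\infty} = \|K_T\|_{L^\infty(X\times X)}$, so it suffices to obtain the desired bound pointwise on the Schwartz kernel of $U_{j,k}(t)U_{j,k}^*(s)$.

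First I would compute the kernel of $U_{j,k}(t)U_{j,k}^*(s)$. Using the spectral theorem and the orthogonality of the spectral projections $dE_{\sqrt{\mathrm{H}}}(\lambda)\,dE_{\sqrt{\mathrm{H}}}(\mu) = \delta(\lambda-\mu)\,dE_{\sqrt{\mathrm{H}}}(\lambda)$, and arguing exactly as in the derivation of \eqref{Uijk}, one finds
\begin{equation*}
U_{j,k}(t)U_{j,k}^*(s) = \int_0^\infty e^{i(t-s)\sqrt{1+\lambda^2}}\,\phi\!\left(\tfrac{\lambda}{2^k}\right)^{2} Q_j(\lambda)\,dE_{\sqrt{\mathrm{H}}}(\lambda)\,Q_j^*(\lambda).
\end{equation*}
The integration-by-parts justification given in \eqref{mean}--\eqref{Uijk} carries over verbatim, since the factor $e^{i(t-s)\sqrt{1+\lambda^2}}$ is uniformly bounded with bounded $\lambda$-derivative on the dyadic support of $\phi(\lambda/2^k)$, and Lemma \ref{QQ'} controls $Q_j(\lambda)$ and $\lambda\partial_\lambda Q_j(\lambda)$ uniformly.

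Next I would apply Proposition \ref{dispersive-h} to the kernel of the displayed integral. The cutoff $\phi^2$ has the same structural properties as $\phi$ (smooth, nonnegative, supported in $[1/2,2]$), and the time variable has simply been replaced by $t-s$; the stationary-phase/integration-by-parts argument used in the proof of Proposition \ref{dispersive-h} depends only on these features of the cutoff together with the microlocalized spectral-measure representation \eqref{beanQ}--\eqref{beans}. Therefore the same argument yields
\begin{equation*}
\bigl|\bigl(U_{j,k}(t)U_{j,k}^*(s)\bigr)(z,z')\bigr| \leq C\, 2^{k(n+1+\theta)/2}\bigl(2^{-k}+|t-s|\bigr)^{-(n-1+\theta)/2}
\end{equation*}
uniformly in $z,z'\in X$ and in $k\geq 0$, for any $0\leq\theta\leq 1$. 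Taking the sup over $(z,z')$ gives the claimed $L^1\to L^\infty$ bound.

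I do not anticipate a genuine obstacle here beyond the bookkeeping: the content of the estimate has already been absorbed into Proposition \ref{dispersive-h}. The only subtlety worth double-checking is the commutation step producing the composed kernel, which uses the standard spectral-theoretic identity together with the uniform $L^2$-boundedness of $Q_j(\lambda)$ and $\lambda\partial_\lambda Q_j(\lambda)$ from Lemma \ref{QQ'}; this is the same mechanism that underlies the proof of Proposition \ref{energy}, so no new analytic input is required.
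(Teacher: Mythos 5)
Your proposal is correct and matches the approach the paper implicitly takes: the paper presents Proposition \ref{prop:Dispersive-h} as an immediate consequence of the kernel estimate in Proposition \ref{dispersive-h}, and you have correctly filled in the two routine steps — the $TT^*$ computation (mirroring \eqref{Uijk}, now with the phase $e^{i(t-s)\sqrt{1+\lambda^2}}$) and the observation that replacing the cutoff $\phi$ by $\phi^2$ does not affect the stationary-phase argument since the proof of Proposition \ref{dispersive-h} only uses smoothness and support in a fixed dyadic interval.
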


\section{Strichartz estimates}
In this section, we show the Strichartz estimates in Theorem
\ref{Strichartz}. To obtain the Strichartz estimates for high frequency, we need a
variant of Keel-Tao's abstract Strichartz estimate.
\subsection{Semiclassical Strichartz estimates}
We recall a variety of the abstract Keel-Tao's Strichartz estimates
theorem proved in \cite{Zhang}, which is an analogue of the semiclassical Strichartz
estimates for Schr\"odinger in \cite{KTZ, Zworski}.

\begin{proposition}\label{prop:semi}
Let $(X,\mathcal{M},\mu)$ be a $\sigma$-finite measured space and
$U: \mathbb{R}\rightarrow B(L^2(X,\mathcal{M},\mu))$ be a weakly
measurable map satisfying, for some constants $C$, $\alpha\geq0$,
$\sigma, h>0$,
\begin{equation}\label{md}
\begin{split}
\|U(t)\|_{L^2\rightarrow L^2}&\leq C,\quad t\in \mathbb{R},\\
\|U(t)U(s)^*f\|_{L^\infty}&\leq
Ch^{-\alpha}(h+|t-s|)^{-\sigma}\|f\|_{L^1}.
\end{split}
\end{equation}
Then for every pair $q,r\in[1,\infty]$ such that $(q,r,\sigma)\neq
(2,\infty,1)$ and
\begin{equation*}
\frac{1}{q}+\frac{\sigma}{r}\leq\frac\sigma 2,\quad q\ge2,
\end{equation*}
there exists a constant $\tilde{C}$ only depending on $C$, $\sigma$,
$q$ and $r$ such that
\begin{equation}\label{stri}
\Big(\int_{\R}\|U(t) u_0\|_{L^r}^q dt\Big)^{\frac1q}\leq \tilde{C}
\Lambda(h)\|u_0\|_{L^2}
\end{equation}
where $\Lambda(h)=h^{-(\alpha+\sigma)(\frac12-\frac1r)+\frac1q}$.
\end{proposition}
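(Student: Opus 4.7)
The plan is to reduce this semiclassical estimate to the classical Keel-Tao abstract Strichartz theorem \cite{KT} via a time rescaling that converts the inhomogeneous dispersive decay $h^{-\alpha}(h+|t-s|)^{-\sigma}$ into the standard homogeneous form $|t-s|^{-\sigma}$, at the cost of explicit powers of $h$ in the constants. Concretely, I would set $V(t):=U(ht)$, so that $\|V(t)\|_{L^2\to L^2}\le C$ and
\[
\|V(t)V(s)^*\|_{L^1\to L^\infty}\le Ch^{-\alpha}\bigl(h+h|t-s|\bigr)^{-\sigma}=Ch^{-\alpha-\sigma}(1+|t-s|)^{-\sigma}\le Ch^{-\alpha-\sigma}|t-s|^{-\sigma}.
\]
Thus $V$ satisfies the Keel-Tao hypotheses with dispersive constant $M:=Ch^{-\alpha-\sigma}$ in place of the usual $M=1$.

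Next, I would apply Keel-Tao to $V$ and track how $M$ propagates through the argument. Interpolating the kernel of $V(t)V(s)^*$ between the $L^2\to L^2$ bound and the $L^1\to L^\infty$ bound $M|t-s|^{-\sigma}$ yields an $L^{r'}\to L^r$ bound of size $\lesssim M^{1-2/r}|t-s|^{-\sigma(1-2/r)}$. The $TT^*$ method combined with a Hardy-Littlewood-Sobolev / Young estimate on this convolution kernel in the $t$-variable gives
\[
\|Vu_0\|_{L^q_tL^r_x}\le C' M^{1/2-1/r}\|u_0\|_{L^2}=C''\,h^{-(\alpha+\sigma)(1/2-1/r)}\|u_0\|_{L^2}
\]
for every admissible $(q,r)$. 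Undoing the rescaling, the change of variables $t\mapsto ht$ gives $\|Vu_0\|_{L^q_tL^r_x}=h^{-1/q}\|Uu_0\|_{L^q_tL^r_x}$, and hence
\[
\|Uu_0\|_{L^q_tL^r_x}=h^{1/q}\|Vu_0\|_{L^q_tL^r_x}\le\tilde{C}\,h^{1/q-(\alpha+\sigma)(1/2-1/r)}\|u_0\|_{L^2}=\tilde{C}\,\Lambda(h)\|u_0\|_{L^2},
\]
which is exactly the desired bound.

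The only delicate point is the Keel-Tao endpoint $q=2$: the sharp admissibility $\tfrac1q+\tfrac\sigma r=\tfrac\sigma2$ there forces the convolution kernel to sit precisely at the Hardy-Littlewood-Sobolev endpoint, where the scalar $TT^*$ argument above fails. Recovering the estimate requires Keel-Tao's bilinear interpolation, combining a Whitney-type dyadic decomposition of $\{(t,s):t\neq s\}$ with an atomic decomposition of the spatial norms. The excluded configuration $(q,r,\sigma)=(2,\infty,1)$ is precisely where even the bilinear argument breaks down, and the hypothesis removes exactly that single failing case. Apart from this endpoint subtlety, the rescaling-plus-Keel-Tao strategy is a routine semiclassical adaptation and matches the argument in \cite{Zhang}.
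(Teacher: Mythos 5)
Your strategy --- rescale time via $V(t)=U(ht)$, apply the abstract Keel--Tao theorem, rescale back, and track the dispersive constant --- is indeed the route the paper has in mind (it cites \cite{Zhang} for essentially this argument), and your bookkeeping of the exponent $\Lambda(h)=h^{1/q-(\alpha+\sigma)(1/2-1/r)}$ is correct for the sharp admissible pairs. You also correctly identify the $(q,r,\sigma)=(2,\infty,1)$ exclusion as coming from the Keel--Tao endpoint bilinear argument.

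There is, however, a real gap in the last inequality of your displayed chain: once you relax
$Ch^{-\alpha-\sigma}(1+|t-s|)^{-\sigma}$ to $Ch^{-\alpha-\sigma}|t-s|^{-\sigma}$, you are in the \emph{untruncated} decay setting of Keel--Tao, and their Theorem 1.2 then yields only the sharp $\sigma$-admissible pairs, i.e.\ $\tfrac1q+\tfrac{\sigma}{r}=\tfrac{\sigma}{2}$. The Proposition, however, also claims the non-sharp pairs with strict inequality, and these do appear in the paper (the KG-admissibility condition \eqref{adm} is an inequality). They cannot be recovered from the sharp ones by H\"older in $t$ on all of $\R$, nor can one run Young's inequality with the kernel $|t-s|^{-\sigma(1-2/r)}$, since that function lies in no $L^{q/2}_t(\R)$. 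The fix is to keep exactly the quantity you already computed, namely $\|V(t)V(s)^*\|_{L^1\to L^\infty}\le Ch^{-\alpha-\sigma}(1+|t-s|)^{-\sigma}$, and invoke the \emph{truncated} decay version of Keel--Tao's Theorem 1.2 (their hypothesis (1.3)), which gives all $\sigma$-admissible pairs $\tfrac1q+\tfrac{\sigma}{r}\le\tfrac{\sigma}{2}$ with the same power $M^{1/2-1/r}$ of the dispersive constant; equivalently, in the non-sharp range one can replace Hardy--Littlewood--Sobolev by Young's inequality, since $(1+|t|)^{-\sigma(1-2/r)}\in L^{q/2}_t(\R)$ precisely when $\tfrac1q+\tfrac{\sigma}{r}<\tfrac{\sigma}{2}$, and the resulting $h$-power is unchanged. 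One further small point: to justify the claimed factor $M^{1/2-1/r}$ cleanly one should either open up the Keel--Tao proof and track $M$, or (more slickly) apply Keel--Tao with its $O(1)$ constant to the doubly-rescaled operator $\widetilde U(t)=C^{-1}U(\lambda t)$ with $\lambda=(M/C^2)^{1/\sigma}$, which normalizes both the energy and the dispersive bounds to $1$, and then undo the rescaling.
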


\subsection{Homogeneous Strichartz estimates} Now we prove the homogeneous Strichartz estimates. Using the
Littlewood-Paley frequency cutoff $\phi_m(\sqrt{\mathrm{H}})$, we
define
\begin{equation}\label{loc}
u_m(t,\cdot)=\phi_m(\sqrt{\mathrm{H}})u(t,\cdot).
\end{equation}
Then the frequency localized solutions
$\{u_m\}_{m\in\Z}$ solves the family of Cauchy problems
\begin{equation}\label{leq}
\partial_{t}^2u_m+\mathrm{H} u_m+u_m=0, \quad u_m(0)=f_m(z),
~\partial_tu_m(0)=g_m(z),
\end{equation}
where $f_m=\phi_m(\sqrt{\mathrm{H}})u_0$ and
$g_m=\phi_m(\sqrt{\mathrm{H}})u_1$. 
Then we can write the solution 
\begin{equation}
u=u^{l}+u^{h},  \qquad u^l=\sum_{m\leq-1}u_m, \quad u^h=\sum_{m\geq0}u_m
\end{equation}
Let $U(t)=e^{it\sqrt{1+\mathrm{H}}}$, then
we write
\begin{equation}\label{sleq}
\begin{split}
u_m(t,z)
=\frac{U(t)+U(-t)}2f_m+\frac{U(t)-U(-t)}{2i\sqrt{1+\mathrm{H}}}g_m.
\end{split}
\end{equation}
Notice that
\begin{equation*}
U(t)=\sum_{j=1}^{N}\sum_{k\in\Z}U_{j,k}(t)=\sum_{j=1}^{N}U_{j}^{\mathrm{low}}(t)+\sum_{j=1}^{N}\sum_{k\geq0}U_{j,k}(t),
\end{equation*}
we can write
\begin{equation*}
U(t)f=\sum_{j}\sum_{k\in\mathbb{Z}}\int_0^\infty
e^{it\sqrt{1+\lambda^2}}\phi(2^{-k}\lambda)Q_j(\lambda)dE_{\sqrt{\mathrm{H}}}(\lambda)
\widetilde{\phi}(2^{-k}\sqrt{\mathrm{H}})f
\end{equation*}
where $\widetilde{\phi} \in C_0^\infty(\R\setminus\{0\})$ takes
values in $[0,1]$ such that $\widetilde{\phi}\phi=\phi$. In
view of  $f_m=\phi(2^{-m}\sqrt{\mathrm{H}})f$, then
$\widetilde{\phi}(2^{-k}\sqrt{\mathrm{H}})f_m$ vanishes if
$|m-k|\geq3$. 
Then we have
\begin{equation}\label{LU(t)}
U(t)f_m=\sum_{j}\sum_{|k-m|\leq 3}\int_0^\infty
e^{it\sqrt{1+\lambda^2}}\phi(2^{-k}\lambda)Q_j(\lambda)dE_{\sqrt{\mathrm{H}}}(\lambda)f_m.
\end{equation}
By the squarefunction
estimates \eqref{square} and Minkowski's inequality, we obtain for
$q,r\geq2$
\begin{equation}\label{LP}
\|u\|_{L^q(\R;L^r(X))}\lesssim \|u^l\|_{L^q(\R;L^r(X))}+
\Big(\sum_{m\geq0}\|u_m\|^2_{L^q(\R;L^r(X))}\Big)^{\frac12}.
\end{equation}

To prove the homogeneous estimates in Theorem \ref{Strichartz}, that
is $F=0$, we need
\begin{proposition}\label{lStrichartz} Let $f_m=\phi_m(\sqrt{\mathrm{H}})u_0$, we have for $m\geq 0$
\begin{equation}\label{lstri}
\|U(t)f_m\|_{L^q_tL^r_z(\mathbb{R}\times X)}\lesssim
2^{ms}\|f_m\|_{L^2(X)},
\end{equation}
where the K-G admissible pair $(q,r)\in [2,\infty]^2$ and $s$ satisfy
\eqref{adm} and \eqref{scaling}.
\end{proposition}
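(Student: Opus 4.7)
The plan is to view the frequency-localized piece $U(t)f_m$ as a semiclassical propagator at scale $h=2^{-m}$ and invoke the abstract semiclassical Keel--Tao estimate (Proposition \ref{prop:semi}), applied to each microlocal piece $U_{j,k}(t)$.

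First, using the partition of unity identity \eqref{LU(t)}, I would decompose
\[
U(t)f_m = \sum_{j=1}^{N}\sum_{|k-m|\le 3} U_{j,k}(t) f_m,
\]
so that it suffices, by the triangle inequality and the fact that only $O(N)$ terms survive, to prove the desired estimate for a single $U_{j,k}(t)$ with $k\sim m\geq 0$. For each such $U_{j,k}(t)$, Proposition \ref{energy} (uniform $L^2$-boundedness) and Proposition \ref{prop:Dispersive-h} (microlocalized high-frequency dispersive estimate) give, with $h = 2^{-k}$,
\[
\|U_{j,k}(t)\|_{L^2\to L^2}\le C,\qquad \|U_{j,k}(t)U_{j,k}^*(s)\|_{L^1\to L^\infty}\le C\,h^{-\alpha}(h+|t-s|)^{-\sigma},
\]
with the exponents $\alpha = \tfrac{n+1+\theta}{2}$ and $\sigma = \tfrac{n-1+\theta}{2}$.

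Next, the KG-admissibility condition \eqref{adm} is exactly $\frac{1}{q}+\frac{\sigma}{r}\le\frac{\sigma}{2}$, $q\ge 2$, with the endpoint $(q,r,\sigma)=(2,\infty,1)$ excluded. So Proposition \ref{prop:semi} applies and yields
\[
\|U_{j,k}(t)f_m\|_{L^q_tL^r_z(\R\times X)} \le \tilde C\,\Lambda(h)\,\|f_m\|_{L^2(X)},
\]
where
\[
\Lambda(h) = h^{-(\alpha+\sigma)(\frac12-\frac1r)+\frac1q} = 2^{k\bigl[(n+\theta)(\frac12-\frac1r)-\frac1q\bigr]}.
\]
The key algebraic check is that the exponent above equals $ks$: using the gap condition \eqref{scaling},
\[
(n+\theta)\Bigl(\tfrac12-\tfrac1r\Bigr)-\tfrac1q = \tfrac{n+\theta}{2}-\tfrac1q-\tfrac{n+\theta}{r} = s,
\]
so $\Lambda(2^{-k}) = 2^{ks}$. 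Since $|k-m|\le 3$, this gives $\Lambda(h)\lesssim 2^{ms}$. Summing over the finitely many surviving indices $j,k$ finishes the bound.

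The most delicate step is choosing the parameter $\theta\in[0,1]$ consistently for the given pair $(q,r)$: for a KG-admissible $(q,r)$ one selects the smallest $\theta\in[0,1]$ for which both \eqref{adm} and \eqref{scaling} are met (the Schr\"odinger-type endpoint $\theta=1$ for high $r$ or Strichartz--wave endpoint $\theta=0$ for low $r$), then applies the semiclassical estimate with that $\theta$. Since Proposition \ref{prop:Dispersive-h} holds uniformly for every $0\le\theta\le 1$, this choice is legitimate and the exponent computation above returns the same $s$ in every case, giving the claim.
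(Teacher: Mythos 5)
Your argument is correct and follows essentially the same route as the paper: decompose via \eqref{LU(t)}, apply Proposition \ref{energy} and Proposition \ref{prop:Dispersive-h} to get \eqref{md} with $h=2^{-k}$, $\alpha=(n+1+\theta)/2$, $\sigma=(n-1+\theta)/2$, invoke Proposition \ref{prop:semi}, and verify $\Lambda(2^{-k})=2^{ks}\sim 2^{ms}$ via the gap condition. The only superfluous part is the closing paragraph about ``selecting the smallest $\theta$'': in the paper $\theta$ is simply part of the KG-admissibility data and $s$ is then determined by \eqref{scaling}, so no optimization over $\theta$ is needed (and the claim that ``the exponent computation returns the same $s$ in every case'' is not quite right, since different $\theta$ give different $s$).
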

Indeed, by using Proposition \ref{energy'},
Proposition \ref{prop:Dispersive-l} and the argument in Keel-Tao \cite{KT}, we have for $2/q\leq n(1/2-1/r)$
\begin{equation}
\begin{split}
\|U_j^{\mathrm{low}} u_0\|_{L^q(\R;L^r(X))}\leq C\|u_0\|_{L^2(X)}.
\end{split}
\end{equation}
Without loss generality, we assume $u_1=0$. By using the Proposition \ref{lStrichartz}, we have
\begin{equation}
\begin{split}
&\|u^l\|_{L^q(\R;L^r(X))}\leq C\sum_{j=1}^N\|U_j^{\mathrm{low}} u_0\|_{L^q(\R;L^r(X))}\leq C\|u_0\|_{L^2(X)},\\
&\sum_{m\geq0}\|u_m\|^2_{L^q(\R;L^r(X))}\leq C 2^{2ms}\|f_m\|^2_{L^2(X)}\leq C\|u_0\|^2_{H^s(X)}.
\end{split}
\end{equation}
Therefore we prove the Strichartz estimate with $u_1=F=0$
\begin{equation}
\|u\|_{L^q(\R;L^r(X))}\leq C\|u_0\|_{H^s(X)}.
\end{equation}

Now we prove this proposition. By using Proposition \ref{energy} and
Proposition \ref{prop:Dispersive-h}, we have the estimates \eqref{md}
for $U_{j,k}(t)$, where $\alpha=(n+1+\theta)/2$, $\sigma=(n-1+\theta)/2$ and
$h=2^{-k}$. Then it follows from Proposition \ref{prop:semi} that
\begin{equation*}
\|U_{j,k}(t)f_m\|_{L^q_t(\R:L^r(X))}\lesssim
2^{k[(n+\theta)(\frac12-\frac1r)-\frac1q]} \|f_m\|_{L^2(X)}.
\end{equation*}
By \eqref{LU(t)}, we obtain 
\begin{equation*}
\|U(t)f_m\|_{L^q_t(\R:L^r(X))}\lesssim
2^{m[(n+\theta)(\frac12-\frac1r)-\frac1q]} \|f_m\|_{L^2(X)}=2^{ms} \|f_m\|_{L^2(X)}
\end{equation*}
which proves \eqref{lstri}.

\subsection{Inhomogeneous Strichartz estimates}
In this subsection, we prove the inhomogeneous Strichartz estimates. Let
$U(t)=e^{it\sqrt{1+\mathrm{H}}}: L^2\rightarrow L^2$. We have already
proved that
\begin{equation}
\|U(t)u_0\|_{L^q_tL^r_z}\lesssim\|u_0\|_{H^s}
\end{equation} holds for all $(q,r,s)$ satisfying \eqref{adm} and \eqref{scaling}.
For $s\in\R$ and $(q,r)$ satisfying \eqref{adm} and \eqref{scaling},
we define the operator $T_s$ by
\begin{equation}\label{Ts}
\begin{split}
T_s: L^2_z&\rightarrow L^q_tL^r_z,\quad f\mapsto (1+\mathrm{H})^{-\frac
s2}e^{it\sqrt{1+\mathrm{H}}}f.
\end{split}
\end{equation}
Then we have by duality
\begin{equation}\label{Ts*}
\begin{split}
T^*_{1-s}: L^{\tilde{q}'}_tL^{\tilde{r}'}_z\rightarrow L^2,\quad
F(\tau,z)&\mapsto \int_{\R}(1+\mathrm{H})^{\frac
{s-1}2}e^{-i\tau\sqrt{1+\mathrm{H}}}F(\tau)d\tau,
\end{split}
\end{equation}
where $1-s=n(\frac12-\frac1{\tilde{r}})-\frac1{\tilde{q}}$.
Therefore we obtain
\begin{equation*}
\Big\|\int_{\R}U(t)U^*(\tau)\mathrm{H}^{-\frac12}F(\tau)d\tau\Big\|_{L^q_tL^r_z}
=\big\|T_sT^*_{1-s}F\big\|_{L^q_tL^r_z}\lesssim\|F\|_{L^{\tilde{q}'}_tL^{\tilde{r}'}_z}.
\end{equation*}
Since $s=n(\frac12-\frac1r)-\frac1q$ and
$1-s=n(\frac12-\frac1{\tilde{r}})-\frac1{\tilde{q}}$, thus $(q,r),
(\tilde{q},\tilde{r})$ satisfy \eqref{scaling}. By the
Christ-Kiselev lemma \cite{CK}, we thus obtain for $q>\tilde{q}'$,
\begin{equation}\label{non-inhomgeneous}
\begin{split}
\Big\|\int_{\tau<t}\frac{\sin{(t-\tau)\sqrt{1+\mathrm{H}}}}
{\sqrt{1+\mathrm{H}}}F(\tau)d\tau\Big\|_{L^q_tL^r_z}\lesssim\|F\|_{L^{\tilde{q}'}_t{L}^{\tilde{r}'}_z}.
\end{split}
\end{equation}
Notice that for all $(q,r), (\tilde{q},\tilde{r})$ satisfy
\eqref{adm} and \eqref{scaling}, we must have $q>\tilde{q}'$.
Therefore we have proved all inhomogeneous Strichartz estimates
including $q=2$.

\section{Wellposedness and small nonlinear scattering}
In this section, we prove Theorem \ref{thm1} and Theorem \ref{thm2}. We prove the results by
a contraction mapping argument. The key point is the application of
Strichartz estimates. 

\subsection{Proof of Theorem \ref{thm1} }
Let $q_0=(n+1)(p-1)/2$, $q_1=2(n+1)/(n-1)$ and
$\alpha=s_c-\frac12$. For any small constant $\epsilon>0$ such that
\begin{equation}
\begin{split}Y:=\Big\{u: ~&u\in C_t(H^{s_c})\cap L^{q_0}([0,T];L^{q_0}(X))\cap L^{q_1}([0,T];
H^{\alpha}_{q_1}(X)),\\&
\|u\|_{L^{q_0}([0,T];L^{q_0}(X))}+\|u\|_{L^{q_1}([0,T];
H^{\alpha}_{q_1}(X))}\leq
C\epsilon\Big\}.\end{split}\end{equation} Consider the solution map
$\Phi$ defined by
\begin{equation*}
\begin{split}
\Phi(u)&=\cos(t\sqrt{1+\mathrm{H}})u_0(z)+\frac{\sin(t\sqrt{1+\mathrm{H}})}{\sqrt{1+\mathrm{H}}}u_1(z)
+\int_0^t\frac{\sin\big((t-s)\sqrt{1+\mathrm{H}}\big)}{\sqrt{1+\mathrm{H}}}F(u(s,z))\mathrm{d}s
\\&=:u_{\text{hom}}+u_{\text{inh}},
\end{split}
\end{equation*}
where $F(u)=\pm |u|^{p-1}u$. We claim the map $\Phi: Y\rightarrow
Y$ is contracting. We first note that the Sobolev
embedding $ L^{q_0}_tH^{\alpha}_{r_0}\hookrightarrow
L_{t,z}^{q_0}$ where $r_0=(\frac{\alpha}n+\frac1{q_0})^{-1}$.  Since $p\geq1+4/(n-1),$ thus $s_c\geq 1/2$. On the other hand, it is easy to
check that the pairs $(q_0,r_0), (q_1,q_1)$ satisfy \eqref{adm} and
\eqref{scaling} with $s=1/2$ and $\theta=0$. By Theorem \ref{Strichartz}, we obtain 
\begin{equation}\label{5.4'}
\begin{split}
\|u_{\text{hom}}\|_{C_t(H^{s_c})\cap
L^{q_0}(\R;L^{q_0}(X))\cap L^{q_1}(\R;
H^{\alpha}_{q_1}(X))}\leq C\big(\|u_0\|_{
H^{s_c}}+\|u_1\|_{H^{s_c-1}}\big).
\end{split}
\end{equation}
Hence we must have
\begin{equation}\label{5.4}
\begin{split}
\|u_{\text{hom}}\|_{ L^{q_0}([0,T];L^{q_0}(X))\cap
L^{q_1}([0,T];H^{\alpha}_{q_1}(X))}\leq \frac12C\epsilon
\end{split}
\end{equation}
for $T=\infty$ if the initial data has small norm $\epsilon(p)$, or,
if not, this inequality will be satisfied for some $T>0$ by the
dominated convergence theorem.  Applying Theorem \ref{Strichartz} with
$\tilde{q}'=\tilde{r}'=\frac{2(n+1)}{n+3}$, one has
\begin{equation}\label{5.5}
\begin{split}
\|u_{\text{inh}}\|_{C_t(H^{s_c})\cap
L^{q_0}([0,T];L^{q_0}(X))\cap L^{q_1}([0,T];
H^{\alpha}_{q_1}(X))}\leq C\|F(u)\|_{L^{\tilde{q}'}_t
H^{\alpha}_{\tilde{r}'}}.
\end{split}
\end{equation}
Note $p\in [1+\frac{4}{n-1}, 1+\frac{4}{n-2}]$, we have $0\leq \alpha\leq1$. By using the
fraction Liebniz rule for Sobolev spaces in Proposition \ref{frule}, we have
\begin{equation}\label{5.5}
\begin{split}
\|F(u)\|_{L^{\tilde{q}'}_tH^{\alpha}_{\tilde{r}'}}\leq
C\|u\|^{p-1}_{L^{q_0}_{t,z}}\|u\|_{L^{q_1}_t
H^{\alpha}_{q_1}}\leq C^2(C\epsilon)^{p-1}\epsilon\leq
\frac{C\epsilon}2.
\end{split}
\end{equation}
A similar argument as above leads to
\begin{equation}\label{5.6}
\begin{split}
&\|\Phi(u_1)-\Phi(u_2)\|_{L^{q_1}([0,T];
H^{\alpha}_{q_1}(X))\cap
L^{q_0}([0,T];L^{q_0}(X))}\\&\leq
C\|F(u_1)-F(u_2)\|_{L^{\tilde{q}'}_t
H^{\alpha}_{\tilde{r}'}}\\&\leq
C^2(C\epsilon)^{p-1}\|u_1-u_2\|_{L^{q_1}([0,T];
H^{\alpha}_{q_1}(X))\cap
L^{q_0}([0,T];L^{q_0}(X))}\\&\leq
\frac12\|u_1-u_2\|_{L^{q_1}([0,T];
H^{\alpha}_{q_1}(X))\cap L^{q_0}([0,T];L^{q_0}(X))}.
\end{split}
\end{equation}
Therefore the solution map $\Phi$ is a contraction map on $Y$ under
the metric $d(u_1,u_2)=\|u_1-u_2\|_{{L^{q_1}([0,T];
H^{\alpha}_{q_1}(X))}\cap L^{q_0}([0,T];L^{q_0}(X))}$.
The standard contraction argument proves the first part of Theorem
\ref{thm1}. Noting that the above argument needs the condition $p\in [1+\frac{4}{n-1}, 1+\frac{4}{n-2}]$ in \eqref{5.4'}. If $(u_0,u_1)\in H^1(X)\times L^2(X)$, we 
extend the local well-posedness for $p\in [p_{\text{conf}}, 1+\frac{4}{n-2}]$. By energy conservation law, we obtain the global existence for large data and finish the final part of  Theorem
\ref{thm1}.

\subsection{Proof of Theorem \ref{thm2} }

For a constant $C$ we define
\begin{equation}
\begin{split}\widetilde{Y}:=\Big\{u: ~&\|u\|_{C_t([0,T];H^{1+\delta})\cap
L^{2}([0,T];L^{\infty}(X))}\leq 2C\Big\}.\end{split}\end{equation} Consider the solution map
$\Phi$ defined by
\begin{equation*}
\begin{split}
\Phi(u)&=\cos(t\sqrt{1+\mathrm{H}})u_0(z)+\frac{\sin(t\sqrt{1+\mathrm{H}})}{\sqrt{1+\mathrm{H}}}u_1(z)
+\int_0^t\frac{\sin\big((t-s)\sqrt{1+\mathrm{H}}\big)}{\sqrt{1+\mathrm{H}}}F(u(s,z))\mathrm{d}s
\\&=:u_{\text{hom}}+u_{\text{inh}},
\end{split}
\end{equation*}
where $F(u)$ is replaced by $F(u)=uDu+|u|^2u$. By Theorem \ref{Strichartz} with $0<\theta=2\delta\ll1$, we obtain
\begin{equation}
\begin{split}
\|u_{\text{hom}}\|_{C_t([0,T];H^{1+\delta})\cap
L^{2}([0,T];L^{\infty}(X))}\leq C\big(\|u_0\|_{
H^{1+\delta}}+\|u_1\|_{H^{\delta}}\big).
\end{split}
\end{equation}
Furthermore one has by Theorem \ref{Strichartz} and choosing small $T$
\begin{equation*}
\begin{split}
&\|u_{\text{inh}}\|_{C_t([0,T];H^{1+\delta})\cap
L^{2}([0,T];L^{\infty}(X))}\\&\leq C\|u D u\|_{L^{\frac1{1-\delta}}_t
L^2(X)}+C\|u^3\|_{L^{\frac1{1-\delta}}_t
L^2(X)}\\ &\leq C T^{\frac12-\delta}\|u\|_{ L^{2}([0,T];L^{\infty}(X))}\|D u\|_{L^{\infty}_t([0,T];
L^2(X))}+CT^{(1-\delta)/3}\|u\|^3_{L^{\infty}_t
L^6(X)}\leq 2C.
\end{split}
\end{equation*}
By choosing $T$ small enough, we have
\begin{equation*}
\begin{split}
&\|\Phi(u_1)-\Phi(u_2)\|_{C_t([0,T];H^{1+\delta})\cap
L^{2}([0,T];L^{\infty}(X))}\\ &\leq C T^{\frac14}\left(\|u_1-u_2\|_{ L^{2}([0,T];L^{\infty}(X))}+\|D (u_1-u_2)\|_{L^{\infty}_t([0,T];
L^2(X))}+\|u_1-u_2\|_{L^{\infty}_t
L^6(X)}\right)\\&\leq \frac12 \|u_1-u_2\|_{C_t([0,T];H^{1+\delta})\cap
L^{2}([0,T];L^{\infty}(X))}
\end{split}
\end{equation*}
The standard contraction argument on $\widetilde{Y}$ completes the proof of Theorem
\ref{thm2}.

\begin{center}

\end{center}

\begin{thebibliography}{99}
\addcontentsline{toc}{section}{References}
\bibitem{Alex} G. Alexopoulos, Spectral multipliers for Markov
chains, JMS of Japan 56(2004), 833-852.


\bibitem{AP} J. P. Anker and V. Pierfelice, Wave and Klein-Gordon equations on hyperbolic spaces,  Analysis and PDE, 7(2014), 953-995.


\bibitem{B} P. Brenner, On space-time means and everywhere defined scattering operators for nonlinear Klein-Gordon equations, Math. Z. (186)1984, 383-391.

\bibitem{Baskin} D. Baskin, A parametrix for the forward fundamental solution of the Klein-Gordon equation on asymptotically de Sitter spaces, J. Funct. Anal. 259(2010), 1673-1719. 

\bibitem{Baskin1} D. Baskin,  A Strichartz estimate for de Sitter space. Proceedings of the Centre for Mathematics and its Applications ANU 44(2010), 97-104.

\bibitem{Baskin2} D. Baskin,  Strichartz estimates on asymptotically de Sitter spaces, Annales Henri Poincaré 14(2013), 221-252.

\bibitem{BM} J. M. Bouclet and H. Mizutani, Global in time Strichartz inequalities on asymptotically flat manifolds with temperate trapping, arXiv:1602.06287v1.

\bibitem{BHS} M. D. Blair, H. F. Smith, and C. D. Sogge, Strichartz estimates for
the wave equation on manifold with boundary, Ann. Inst. H.
Poincar$\acute{e}$ Anal. Non Lin$\acute{e}$aire  26(2009),
1817-1829.

\bibitem{BFM} M. D. Blair, G. A. Ford, and J. L. Marzuola, Strichartz
estimates for the wave equation on flat cones, Int. Math. Res. Not.
3(2013) 562-591.

\bibitem{Burq} N. Burq, Global Strichartz estimates for nontrapping geometries:
about an article by H. Smith and C. D. Sogge, Comm. in PDE,
28(2003), 1675-1683.

\bibitem{BLP} N. Burq, G. Lebeau, and F. Planchon, Global existence
for energy critical waves in 3-D domains, J. Amer. Math. Soc.  21
(2008), 831-845.

\bibitem{CLY} S. Y. Cheng, P. Li and S. T. Yau, On the upper
estimate of a complete Riemannian manifold, Amer. J. Math.
103(1981), 1021-1063.

\bibitem{CK} M. Christ and A. Kiselev,
Maxiamal functions associated to filtrations, J. Funct. Anal.
179(2001), 409-425.

\bibitem{CRT} T. Coulhon, E. Russ and V. Tardivel-Nachef,
Sobolev algebras on Lie groups and Riemannian manifolds, Amer. J.
Math. 123(2001), 283-342.

\bibitem{H} P. Hintz, Global well-posedness of quasilinear wave equations on asymptotically de Sitter spaces, Annales de l'Institut Fourier, 66(2016), 1285-1408.

\bibitem{HV} P. Hintz and A. Vasy, Semilinear wave equations on asymptotically de Sitter, Kerr-de Sitter and Minkowski spacetimes, Analysis and PDE, 8(2015) 1807-1890.

\bibitem{HV1} P. Hintz and A. Vasy, Global analysis of quasilinear wave equations on asymptotically Kerr-de Sitter spaces, Int. Math. Res. Notices (2015) doi: 10.1093/imrn/rnv311.


\bibitem{GV} J. Ginibre and G. Velo, Time decay of finite energy solutions of the nonlinear Klein-Gordon and Schr\"odinger equations, Ann. Inst. Henri Poincare, 43(1985) 339-442.

\bibitem{Grigor} A. Grigor'yan, Gaussian upper bounds for the heat kernel on arbitrary manifolds,
J. Differ. Geom. 45(1997), 33-52.

\bibitem{GHS1}C. Guillarmou, A. Hassell and A. Sikora, Resolvent at low
energy III: the spectral measure, Trans. Amer. Math. Soc.,
365(2013), 6103-6148.

\bibitem{GHS2} C. Guillarmou, A. Hassell and A. Sikora, Restriction and
spectral multiplier theorems on asymptotically conic manifolds,
Analysis and PDE, 6(2013), 893-950.

\bibitem{HTW1} A. Hassell, T. Tao and J. Wunsch, A Strichartz inequality for the
Schr\"odinger equation on non-trapping asymptotically conic
manifolds, Comm. in PDE 30(2005), 157-205.

\bibitem{HTW} A. Hassell, T. Tao and J. Wunsch, Sharp Strichartz estimates on
non-trapping asymptotically conic manifolds, Amer. J. Math.,
128(2006), 963-1024.

\bibitem{HW} A. Hassell and J. Wunsch, The semiclassical resolvent and propagator for non-trapping scattering
metrics, Adv. Math. 217(2008), 586-682.

\bibitem{HW1} A. Hassell and J. Wunsch, The Schr\"odinger propagator for
scattering metrics, Annals of Mathematics, 162(2005), 487-523.

\bibitem{HV1} A. Hassell and A. Vasy, The spectral projections and resolvent for
scattering metrics, Journal d'Analyse Mathematique 79(1999),
241-298.


\bibitem{HZ} A. Hassell and J. Zhang, Global-in-time Strichartz estimates
on non-trapping asymptotically conic manifolds, Analysis and PDE, 9(2016), 151-192.

\bibitem{IMN} S. Ibrahim, N. Masmoudi, K. Nakanishi, Scattering threshold for the
focusing nonlinear Klein-Gordon equation, Analysis and PDE 4 (2011) 405-460.

\bibitem{Kap} L. V. Kapitanski, Norm estimates in Besov and
Lizorkin-Treibel spaces for the solution of second order linear
hyperbolic equations, J. Sov. Math., 56(1991), 2348-2389.

\bibitem{KT} M. Keel and T. Tao, Endpoint Strichartz estimates,
Amer. J. Math. 120(1998), 955-980.

\bibitem{KTZ} H. Koch, D. Tataru and M. Zworski,  Semiclassical $L^p$
estimates, Ann. Henri PoincarPoincar$\acute{e}$, 8(2007)885-916.

\bibitem{L} H. Lindblad, A sharp counterexample to the local existence of low-regularity solutions to nonlinear wave equations, Duke Math J, 72 (1993), 503-539.

\bibitem{LS} H. Lindblad and C. D. Sogge, On existence and scattering with
minimal regularity for semi-linear wave equations, J. Funct. Anal.,
130(1995) 357-426.

\bibitem{Melrose} R. B. Melrose. The Atiyah-Patodi-Singer Index Theorem. Research Notes in Mathematics, Vol 4. Peters, 1993.

\bibitem{Melrose1} R. B. Melrose. Spectral and scattering theory for the Laplacian on asymptotically Euclidian spaces. 
In Spectral and scattering theory (Sanda, 1992), volume 161 of Lecture Notes in Pure and Appl. Math., pages 85-130. Dekker, New York, 1994.

\bibitem{Met} J. Metcalfe, Global Strichartz estimates for solutions
to the wave equation exterior to a convex obstacle, Trans. Amer.
Math. Soc. 356(2004), 4839-4855.

\bibitem{NS} K. Nakanishi and W. Schlag, Invariant manifolds and dispersive
Hamiltonian evolution equations, European Mathematical Society, 2011.

\bibitem{PS}  G. Ponce,  and T. Sideris. Local regularity of nonlinear wave equations in three space dimensions, Comm. PDE, 18 (1993), 169-177.

\bibitem{Strauss} W. A. Strauss, Nonlinear scattering theory at low energy, J. Funct. Anal. 41(1981), 110-133.

\bibitem{Hart1} H. F. Smith, A parametrix construction for wave
equation with $C^{1,1}$ coefficients, Ann. Inst. Fourier(Grenoble),
48(1998), 797-836.

\bibitem{HS}  H. F. Smith and C. D. Sogge, Global Strichartz estimates for
nontrapping perturbations of the Laplacian, Comm.  in PDE,
25(2000), 2171-2183.

\bibitem{Stein} E. M. Stein, Singular Integrals and Differentiability Properties of Functions,
Princeton University Press, Princeton (1970).


\bibitem{Sogge} C. D. Sogge, Lectures on Nonlinear Wave Equations,
International Press, Cambridge, MA, 1995.



\bibitem{Str} R. Strichartz, Restrictions of Fourier transforms to quadratic surfaces and decay of solutions of wave
equations, Duke. Math. J., 44(1977),  705-714.

\bibitem{Tataru} D. Tataru, Strichartz estimates for second order
hyperbolic operators with nonsmooth coefficients III, J. Amer. Math.
Soc., 15(2002), 419-442.


\bibitem{Taylor} M. E. Taylor, Tools for PDE. Mathematical Surveys and Monographs, 81. American Math-
ematical Society, Providence, RI, 2000.

\bibitem{Vasy}A. Vasy. Microlocal analysis of asymptotically hyperbolic and Kerr-de Sitter spaces (with an appendix by Semyon Dyatlov). Inventiones mathematicae, pages 1-133, 2013.

\bibitem{Zworski} M. Zworski,  Semiclassical Analysis, Graduate Studies in Mathematics 138, AMS 2012.

\bibitem{Zhang} J. Zhang, Strichartz estimates and nonlinear wave equation on nontrapping asymptotically conic manifolds,  Adv. in Math., 271(2015), 91-111.



\end{thebibliography}
\end{document}